\documentclass[11pt]{article}
\usepackage{arxiv}

\usepackage[utf8]{inputenc} 
\usepackage[T1]{fontenc}    
\usepackage{hyperref}       
\usepackage{xcolor}
\hypersetup{
    colorlinks=true,
    linkcolor=blue,
    filecolor=magenta,      
    urlcolor=cyan,
    }
\usepackage{url}            
\usepackage{booktabs}       
\usepackage{amsfonts}       
\usepackage{nicefrac}       
\usepackage{microtype}      
\usepackage{lipsum}		
\usepackage{graphicx}
\usepackage{doi}
\newenvironment{customproof}[1]
{\begin{proof}}
{\end{proof}}
\usepackage{amsmath}
\usepackage{bm}

\DeclareMathOperator{\diag}{diag}
\usepackage{enumerate}
\usepackage{enumitem}

\usepackage{amsthm}
\newtheorem{theorem}{Theorem}[section]
\newtheorem{lemma}[theorem]{Lemma}
\newtheorem{defn}[theorem]{Definition}
\usepackage{nicematrix}

\theoremstyle{remark}



\title{Modified Patankar Linear Multistep methods for production-destruction systems}


\author{ \href{https://orcid.org/0000-0003-1412-8702}{\includegraphics[scale=0.06]{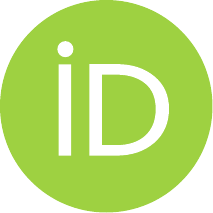}}\hspace{1mm}Giuseppe~Izzo\thanks{Members of the INdAM Research group 
GNCS} \\
	Department of Mathematics and Applications\\
	University of Naples ``Federico II''\\
	Via Cintia, I - 80126 Naples, Italy. \\
	\texttt{giuseppe.izzo@unina.it} \\
	\And
	\href{https://orcid.org/0000-0003-4545-6266}{\includegraphics[scale=0.06]{orcid.pdf}}\hspace{1mm}Eleonora~Messina$^*$ \\
	Department of Mathematics and Applications\\
	University of Naples ``Federico II''\\
	Via Cintia, I - 80126 Naples, Italy. \\
	\texttt{eleonora.messina@unina.it} \\
	\And
	\href{https://orcid.org/0000-0002-1869-945X}{\includegraphics[scale=0.06]{orcid.pdf}}\hspace{1mm}Mario~Pezzella$^*$ \\
	Institute for Applied Mathematics {\small``Mauro Picone''}\\
	C.N.R. National Research Council of Italy,\\
	Via P. Castellino, 111 - 80131 Naples - Italy. \\
	\texttt{mario.pezzella@cnr.it} \\
 \And
	\href{https://orcid.org/0000-0003-2746-7535}{\includegraphics[scale=0.06]{orcid.pdf}}\hspace{1mm}Antonia~Vecchio$^*$ \\
	Institute for Applied Mathematics {\small``Mauro Picone''}\\
	C.N.R. National Research Council of Italy,\\
	Via P. Castellino, 111 - 80131 Naples - Italy. \\
	\texttt{antonia.vecchio@cnr.it} \\
}



\hypersetup{
pdftitle={A template for the arxiv style},
pdfsubject={q-bio.NC, q-bio.QM},
pdfauthor={Giuseppe~Izzo, Eleonora~Messina, Mario~Pezzella, Antonia~Vecchio},
pdfkeywords={Patankar-type schemes, Positivity-preserving, High order, Conservativity, Linear multistep methods},
}

\begin{document}
\maketitle
\begin{abstract}
	Modified Patankar schemes are linearly implicit time integration methods designed to be unconditionally positive and conservative. In the present work we extend the Patankar-type approach to linear multistep methods and prove that the resulting discretizations retain, with no restrictions on the step size, the positivity of the solution and the linear invariant of the continuous-time system. Moreover, we provide results on arbitrarily high order of convergence and we introduce an embedding technique for the Patankar weights denominators to achieve it.
\end{abstract}

\keywords{Patankar-type schemes \and 
Positivity-preserving \and
High order \and
Conservativity \and 
Linear multistep methods}

\section{Introduction}
We address Production-Destruction Systems (PDS) of ordinary differential equations of the form 
\begin{equation}\label{eq:PDS}
	y_i^\prime(t)=P_i(\bm{y}(t))-D_i(\bm{y}(t)), \qquad y_i(0)=y_i^0, \qquad i=1,\dots, N,
\end{equation}
where $\bm{y}(t)=(y_1(t),\dots,y_N(t))^\mathsf{T}\in \mathbb{R}^N$ is the vector of constituents at time $t\geq 0.$ The right-hand side of \eqref{eq:PDS} incorporates the non-negative terms $P_i(\bm{y}(t))$ and $D_i(\bm{y}(t))$ defined as follows
\begin{equation}\label{eq:PDS_terms}
	P_i(\bm{y})=\sum_{j=1}^{N}p_{ij}(\bm{y}) \qquad \mbox{and} \qquad D_i(\bm{y})=\sum_{j=1}^{N}d_{ij}(\bm{y}), \qquad i=1,\dots, N,
\end{equation} 
where the functions $p_{ij}(\bm{y})\geq 0$ and $d_{ij}(\bm{y})\geq 0$ represent, respectively, the rates of production and destruction processes for each constituent and component. Here, we introduce the matrices $P(\bm{y})=\{p_{ij}(\bm{y})\}\in \mathbb{R}^{N\times N},$ $D(\bm{y})=\{d_{ij}(\bm{y})\}\in \mathbb{R}^{N\times N}$ and equivalently reformulate the PDS \eqref{eq:PDS} as
\begin{equation}\label{eq:sistema_compatto}
    \bm{y}^\prime(t)=(P(\bm{y}(t))-D(\bm{y}(t)))\bm{e}, \qquad\qquad \bm{y}(0)=\bm{y}^0,
\end{equation}
where here, and in the following sections, $\bm{e}=(1,\dots,1)^\mathsf{T}$ represents a vector of all ones with the appropriate number of components.

The mathematical modeling of various real life phenomena leads to differential systems of the form \eqref{eq:PDS}. PDS find relevant applications into chemical \cite{Higham2008,Kou2005} and biogeochemical \cite{BURCHARD2006,Semeniuk} processes, as well as into epidemic \cite{CAMPOS2021751,physics3020028}, ecosystem  \cite{fasham1990nitrogen,HENSE20102330,Warns} and astrophysical \cite{refId0} models  (see also \cite[Table 1]{anderHeiden1982} for a comprehensive list of applications). 

Our investigation is restricted to positive and fully conservative production-destruction systems, for which we assume that 
\begin{equation}\label{eq: pos_cont}
	y_i^0>0 \qquad \mbox{implies} \qquad y_i(t)>0,  \qquad \forall t> 0, \qquad i=1,\dots, N
\end{equation}
and that for each component-wise positive vector $\bm{z}\in \mathbb{R}^N,$
\begin{equation}\label{cond:b}
	p_{ij}(\bm{z})=d_{ji}(\bm{z}) \qquad \mbox{and} \qquad p_{ii}(\bm{z})=d_{ii}(\bm{z})=0, \qquad 1\leq i,j\leq N.
\end{equation}
The positivity condition \eqref{eq: pos_cont} is guaranteed as long as the initial value problem \eqref{eq:sistema_compatto} with positive initial value has a unique solution and the matrix $D(\bm{y})$ vanishes as $\bm{y}\to \bm{0}=(0,\dots,0)^\mathsf{T}\in \mathbb{R}^N$ (further discussions on this topic can be found in \cite{MPEuler,FORMAGGIA2011,TORLO2022}). General theoretical results about the existence, uniqueness and positivity of the solution of a production-destruction system have been outlined in \cite[Theorem 3.3]{FORMAGGIA2011} and \cite[Theorem 1.2]{TORLO2022}.  

The assumption \eqref{cond:b} leads to the linear invariant $\bm{e}^\mathsf{T}\bm{y}(t)$ and to the following conservation law 
\begin{equation}\label{eq:int_primo}
	\bm{e}^\mathsf{T}(\bm{y}(t)-\bm{y}^0)=0,\qquad \qquad \qquad \qquad\forall t \geq 0.
\end{equation}
As a matter of fact, for a fully conservative production-destruction system, the equality
\begin{equation*}
	\bm{e}^\mathsf{T}(P(\bm{y}(t))-D(\bm{y}(t)))\bm{e}=0, \quad \quad \qquad\forall t\geq 0,
\end{equation*}
holds true and from \eqref{eq:sistema_compatto}, $\left(\bm{e}^\mathsf{T}\bm{y}(t)\right)'=0.$ 

When applied to a positive and fully conservative PDS, a numerical method ought to satisfy a discrete counterpart of \eqref{eq: pos_cont} and \eqref{eq:int_primo}. However, the requirement for such properties in standard schemes usually yields a severe restriction on the stepsize. This motivates the interest in devising unconditionally positive methods which, provided the initial values are positive, produce positive numerical solutions independently of the steplength. Moreover, we will refer to a numerical method as unconditionally conservative if it retains the linear invariant of the system \eqref{eq:sistema_compatto} whatever the stepsize.

The development of unconditionally positive and conservative numerical methods for production-destruction differential systems has been addressed in several scientific contributions (see, for instance, \cite{NSFD_PDS_1,NSFD_PDS_2}). Particularly effective in this context is the class of Modified Patankar methods, arising from the manipulations of explicit schemes via the \textit{Patankar-trick} \cite{patankar1980numerical}, with the aim of ensuring the desired preservation properties for the numerical solution.  The origin of this class traces back to \cite{MPEuler} where a modification of forward Euler and Heun’s methods resulted in a first and second order positivity-preserving and conservative scheme, respectively. More recently, the same approach has been extended to Runge-Kutta methods and a general definition of Modified Patankar Runge-Kutta (MPRK) schemes has been introduced. In \cite{Kopecz2018,Kopecz2018second,Kopecz2019}, MPRK schemes of second and third order were presented and analyzed. Strong-Stability-Preserving MPRK (SSPMPRK) methods were designed in \cite{Huang2019,Huang2019II} to solve convection equations with stiff source terms, starting from the Shu-Osher form \cite{Shu_Form} of Runge-Kutta discretizations. A comprehensive analysis through center manifold theory of the stability of the MPRK and SSPMPRK schemes was carried out in \cite{Izgin1,Izgin2,Izgin3} and \cite{IzginSSPMPRK}, respectively. 

The modified Patankar approach was not solely limited to Runge-Kutta methods. A predictor-corrector modified Patankar scheme was presented in \cite{FORMAGGIA2011}. The proposed discretization relied on a MPRK method as a predictor and on a modified Patankar three-steps backward differentiation formula for the correction step. High order Modified Patankar Deferred Correction (MPDeC) schemes were outlined in \cite{Torlo2020,TORLO2022} as effective integrators to mitigate common issues of MPRK discretizations, including oscillations around steady-state solutions and accuracy loss in cases of vanishing initial states.
The geometrically conservative non-standard integrators proposed in \cite{MARTIRADONNA2020, GeCo_Stab}, specifically designed to preserve linear invariants and to ensure the positivity of more general biochemical systems, are effectively employed for addressing production-destruction systems. Two modified Patankar methods based on specific second and third order multistep schemes were presented in  \cite{zhu2022MPLM}.

The leading purpose of this paper is to broaden the benefits of Patankar-type modifications to linear multistep methods, providing a general numerical framework for achieving high order of convergence. The manuscript is organized as follows: in Section \ref{sec:MPLM}, the class of unconditionally positive and conservative Modified Patankar Linear Multistep (MPLM) methods is formulated. With Section \ref{sec:Convergence} a theoretical investigation of the approximation error is performed and general results on arbitrarily high order of convergence are provided. Furthermore, a recursive embedding technique for the efficient computation of the Patankar weight denominators is presented. Numerical experiments are reported in Section \ref{sec:numerical_Experiments}, while some remarks and future perspectives, in Section \ref{sec:Conclusions}, conclude the paper.   

\section{The Modified Patankar Linear Multistep scheme}\label{sec:MPLM}
\label{sec:numerical_Method}
Let $\bm{\alpha}=(\alpha_1,\dots,\alpha_k)^\mathsf{T}\in \mathbb{R}^k$ and $\bm{\beta}=(\beta_1,\dots,\beta_k)^\mathsf{T}\in \mathbb{R}^k$ be the coefficients of an explicit $k$-steps Linear Multistep (LM) method, with $k>1$ positive integer. Assume that such a method is convergent of order $p\geq 1,$ which implies that
\begin{equation}\label{eq:Pre_Consistency}
    \sum_{r=1}^{k}\alpha_r=1 \qquad \quad \mbox{and} \qquad \quad
    \sum_{r=1}^{k}\left(r^q\alpha_r-qr^{q-1}\beta_r\right)=0, \qquad  1\leq q\leq p.
\end{equation}
Consider $h>0,$ $t_n=nh$ for $n\geq 0$ and $\bm{y}^n=(y_1^n,\dots,y_N^n)^\mathsf{T}\approx \bm{y}(t_n),$ for $n\geq k$.  From now on, all the inequalities involving vectors are considered component-wise. In analogy with the scientific literature on Patankar methods, we provide the following definition.
\begin{defn}
    Given the LM coefficients $\bm{\alpha}\geq 0$ and $\bm{\beta}\geq 0,$ the $k$-steps scheme
    \begin{equation}\label{eq:MPLM}
	y_i^n=\sum_{r=1}^{k}\alpha_ry_i^{n-r}+h\sum_{r=1}^{k}\beta_r\sum_{j=1}^{N}\left(p_{ij}(\bm{y}^{n-r})\dfrac{y_j^n}{\sigma_j^n}-d_{ij}(\bm{y}^{n-r})\dfrac{y_i^n}{\sigma_i^n}\right), \quad 1\leq i \leq N, \; \ n\geq k,
\end{equation}
where $\bm{y}^{0},\dots,\bm{y}^{k-1}\in \mathbb{R}^N$ are given and $\sigma_i^n\in \mathbb{R},$ $i=1,\dots,N,$ is referred to as a Modified Patankar Linear Multistep (MPLM-$k$) method if 
\begin{itemize}
    \item $\sigma_i^n$ are unconditionally positive for each $i=1,\dots,N$ and $n\geq k;$
    \item $\sigma_i^n$ are independent of $y_i^n$ for each $i=1,\dots,N$ and $n\geq k.$
\end{itemize}
\end{defn}

The linearly implicit numerical method \eqref{eq:MPLM} is devised by applying to a LM discretization of \eqref{eq:PDS} a modified version of the source term linearization technique \cite{Burchard2005} originally proposed by Patankar in \cite[Section 7.2-2]{patankar1980numerical}, with the aim of designing unconditionally positive and conservative schemes.
More specifically, the terms
\begin{equation*}    
	\sigma_i^n=\sigma_i^n(\bm{y}^{n-1},\dots, \bm{y}^{n-k}), \qquad \quad n\geq k, \qquad \quad i=1,\dots,N, 
\end{equation*}
are referred to as Patankar-Weight Denominators (PWDs). 
Notice that the discrete equation \eqref{eq:MPLM} is general enough to encompass the linear methods presented in \cite[Section 2.2]{zhu2022MPLM}. As a matter of fact, in case of 
\begin{equation*}
    \begin{split}
    &k=3, \qquad\quad \bm{\alpha}=\left(\dfrac{3}{4},0,\dfrac{1}{4}\right)^\mathsf{T}, \qquad\qquad \bm{\beta}=\left(0,\dfrac{3}{2},0\right)^\mathsf{T}, \qquad\quad\quad \mbox{and}
    \\
    &k=4, \qquad\quad \bm{\alpha}=\left(\dfrac{16}{27},0,0,\dfrac{11}{27}\right)^\mathsf{T}, \qquad \bm{\beta}=\left(\dfrac{16}{9},0,0,\dfrac{4}{9}\right)^\mathsf{T},
    \end{split}
\end{equation*}
\eqref{eq:MPLM} coincides with the second and third order schemes of \cite{zhu2022MPLM}, respectively.

Throughout this paper we refer, when needed, to the following equivalent compact notation of \eqref{eq:MPLM} 
\begin{equation}\label{eq:Compact_Expression}
    \bm{y}^n=\sum_{r=1}^k \alpha_r\bm{y}^{n-r} +h\sum_{r=1}^k\beta_rM^{n-r}\ \bm{y}^n, \qquad \qquad n\geq k,
\end{equation}
where 
\begin{equation}\label{eq:forma_M}
    M^{n-r}=\left( P(\bm{y}^{n-r})-\diag(D(\bm{y}^{n-r})\bm{e}) \right) \diag(\bm{S}^n)\in \mathbb{R}^{N\times N}, \qquad r=1,\dots,k,
\end{equation}
with $\bm{S}^n=(1/\sigma_1^n,\dots,1/\sigma_N^n)^\mathsf{T}\in \mathbb{R}^{N},$ $n\geq k.$

As already pointed out, the Patankar-type transformation is adopted to ensure positivity for the approximation of the solution to \eqref{eq:PDS}. The following result provides some conditions on the coefficients of \eqref{eq:MPLM}, or equivalently of \eqref{eq:Compact_Expression}, which lead to a positive numerical solution regardless of the discretization step-size.

\begin{lemma}\label{lem:positivity}
	Let $\{\bm{y}^n\}_{n\geq k}$ be the approximation of the solution to \eqref{eq:sistema_compatto} computed by \eqref{eq:Compact_Expression}. Suppose that \eqref{eq: pos_cont} and \eqref{cond:b} hold true and assume that
	\begin{enumerate}[label=$\mbox{h}_{\arabic*})$]
		\item\label{hp:3} the given starting values $\bm{y}^{0},\dots,\bm{y}^{k-1}$ are positive.
	\end{enumerate} 
	Then, for all $h >0$ and $n\geq0,$ $\bm{y}^n>0.$
\end{lemma}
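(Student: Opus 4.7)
The plan is to proceed by induction on $n$, rewriting \eqref{eq:Compact_Expression} as a linear system and showing that its coefficient matrix is a nonsingular $M$-matrix whose inverse sends a positive right-hand side to a strictly positive vector.

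The base case $0\leq n\leq k-1$ is immediate from hypothesis \ref{hp:3}. For the inductive step I would assume $\bm{y}^{n-1},\dots,\bm{y}^{n-k}>0$ and move the implicit part of \eqref{eq:Compact_Expression} to the left-hand side, writing the scheme as $A\bm{y}^n=\bm{b}$ with
\begin{equation*}
A \;=\; I-h\sum_{r=1}^{k}\beta_r M^{n-r}, \qquad \bm{b} \;=\; \sum_{r=1}^{k}\alpha_r\bm{y}^{n-r}.
\end{equation*}
Because $\bm{\alpha}\geq 0$ and $\sum_r\alpha_r=1$ (from \eqref{eq:Pre_Consistency}), and each $\bm{y}^{n-r}>0$ by induction, the right-hand side $\bm{b}$ is strictly positive. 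The PWDs $\sigma_j^n$ are positive by definition of an MPLM method, so inspecting \eqref{eq:forma_M} together with \eqref{cond:b} gives $M^{n-r}_{ij}=p_{ij}(\bm{y}^{n-r})/\sigma_j^n\geq 0$ for $i\neq j$ and $M^{n-r}_{ii}=-\sum_{l\neq i}d_{il}(\bm{y}^{n-r})/\sigma_i^n\leq 0$. Combined with $\beta_r\geq 0$, this shows $A$ is a $Z$-matrix with $A_{ii}\geq 1$ and $A_{ij}\leq 0$ for $i\neq j$.

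Next I would exploit full conservativity to control the column sums of $A$. Using $p_{ij}=d_{ji}$ from \eqref{cond:b}, for every column index $j$,
\begin{equation*}
\sum_{i=1}^{N}A_{ij}
\;=\;1+\frac{h}{\sigma_j^n}\sum_{r=1}^{k}\beta_r\Bigl(\sum_{l\neq j}d_{jl}(\bm{y}^{n-r})-\sum_{i\neq j}p_{ij}(\bm{y}^{n-r})\Bigr)
\;=\;1,
\end{equation*}
so $A$ is strictly column diagonally dominant. Gershgorin applied to $A^{\mathsf{T}}$ shows $A$ is nonsingular, and the standard $Z$-matrix characterisation gives $A^{-1}\geq 0$, so $\bm{y}^n=A^{-1}\bm{b}\geq 0$ exists and is uniquely defined.

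It remains to upgrade non-negativity to strict positivity. The main (minor) obstacle is that $A^{-1}\geq 0$ alone does not rule out zero components of $\bm{y}^n$. I would argue by contradiction: if $y_i^n=0$ for some $i$, then the $i$-th equation of $A\bm{y}^n=\bm{b}$ reads
\begin{equation*}
\sum_{j\neq i}A_{ij}\,y_j^n \;=\; b_i,
\end{equation*}
whose left-hand side is $\leq 0$ (since $A_{ij}\leq 0$ and $y_j^n\geq 0$) while $b_i>0$, a contradiction. Hence $\bm{y}^n>0$ and the induction closes.
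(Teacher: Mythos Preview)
Your proof is correct and follows essentially the same route as the paper: both rewrite the scheme as a linear system, verify that the coefficient matrix is a $Z$-matrix, use the conservativity relation \eqref{cond:b} to obtain strict (column) diagonal dominance, and conclude via the $M$-matrix characterisation that the inverse is entrywise nonnegative, closing by induction. Your explicit contradiction argument upgrading $\bm{y}^n\geq 0$ to $\bm{y}^n>0$ is a welcome addition that the paper leaves implicit.
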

\begin{proof}
    From \eqref{eq:Compact_Expression}, the numerical method reads $M \bm{y}^n=\sum_{r=1}^k \alpha_r\bm{y}^{n-r},$ $n\geq k,$ with $M=I-h \sum_{r=1}^{k} \beta_r M^{n-r}\in \mathbb{R}^{N\times N}.$
    From \eqref{eq:forma_M}, the entries of $M^{n-r}$ are
	\begin{equation*}
		M^{n-r}_{ii}=-\sum_{j=1}^N \frac{d_{ij}(\bm{y}^{n-r})}{\sigma_i^n}, \qquad \; M^{n-r}_{ij}=\frac{p_{ij}(\bm{y}^{n-r})}{\sigma_j^n}, \qquad \; 1\leq i,j\leq N, \qquad \; i\neq j, 
	\end{equation*}
    for $n\geq k$ and $r=1,\dots, k,$ which implies that $M^\mathsf{T}$ is a Z-matrix, since 
	\begin{equation*}
			M_{ii}=1+h \sum_{r=1}^{k}\beta_r \sum_{j=1}^N \frac{d_{ij}(\bm{y}^{n-r})}{\sigma_i}>0 \quad \mbox{and} \quad
			M_{ji}=-h\sum_{r=1}^{k}\beta_r \frac{p_{ji}(\bm{y}^{n-r})}{\sigma_i}<0.
	\end{equation*}
    Furthermore, from \eqref{cond:b}, $M^\mathsf{T}$ is strictly diagonally dominant since
    \begin{equation*}
			\sum_{j=1,j\neq i}^N \left|M_{ji}\right|=h\sum_{r=1}^{k}\beta_r \sum_{j=1}^N \frac{d_{ij}(\bm{y}^{n-r})}{\sigma_i}<M_{ii}, \qquad \qquad \quad \ 1\leq i,j\leq N, \qquad \; i\neq j. 
	\end{equation*}
	Therefore $M^\mathsf{T}$ is a M-matrix (see, for instance, \cite[Lemma 6.2]{axelsson1996iterative}). It follows that $M$ is a M-matrix as well, it is invertible and $M^{-1}$ has all non-negative entries, independently of $n\geq k.$ Consequently $\bm{y}^n=M^{-1} \sum_{r=1}^k \alpha_r\bm{y}^{n-r}$ and an inductive procedure, starting from \ref{hp:3}, yields the result.
\end{proof}

An investigation of the discrete-time level preservation of the invariance property \eqref{cond:b} of a positive and fully conservative PDS is carried out with the following lemma.  

\begin{lemma}\label{lem:conservativity}
	Let $\{\bm{y}^n\}_{n\geq k}$ be the approximation of the solution to \eqref{eq:sistema_compatto} computed by \eqref{eq:Compact_Expression}. Suppose that \eqref{eq: pos_cont} and \eqref{cond:b} hold true and assume that 
	\begin{enumerate}[label=$\mbox{h}^\prime_{\arabic*})$]
		\item\label{hp':2} for the given starting values, $\bm{e}^\mathsf{T}\bm{y}^{0}=\ldots=\bm{e}^\mathsf{T}\bm{y}^{k-1}=\eta.$
	\end{enumerate} 
	Then, for each $h >0$ and $n\geq 0,$ $\bm{e}^\mathsf{T}\left( \bm{y}^{n}-\bm{y}^{0}\right)=0.$
\end{lemma}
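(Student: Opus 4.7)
The plan is to proceed by strong induction on $n$. The base case $n=0,\dots,k-1$ follows directly from hypothesis \ref{hp':2}, since $\bm{e}^\mathsf{T}(\bm{y}^n-\bm{y}^0)=\eta-\eta=0$. For the inductive step I would fix $n\geq k$, assume $\bm{e}^\mathsf{T}\bm{y}^{n-r}=\eta$ for $r=1,\dots,k$, and prove the same identity at level $n$.

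The key computational step is to left-multiply the compact form \eqref{eq:Compact_Expression} by $\bm{e}^\mathsf{T}$, obtaining
\begin{equation*}
\bm{e}^\mathsf{T}\bm{y}^n = \sum_{r=1}^k \alpha_r\,\bm{e}^\mathsf{T}\bm{y}^{n-r} + h\sum_{r=1}^k \beta_r\,\bm{e}^\mathsf{T} M^{n-r}\bm{y}^n,
\end{equation*}
and then to show that $\bm{e}^\mathsf{T} M^{n-r}=\bm{0}^\mathsf{T}$ for every $r=1,\dots,k$. Using the explicit entries of $M^{n-r}$ displayed in the proof of Lemma~\ref{lem:positivity}, the $j$-th column sum is
\begin{equation*}
\sum_{i=1}^N M^{n-r}_{ij} = \frac{1}{\sigma_j^n}\left(\sum_{i=1, i\neq j}^N p_{ij}(\bm{y}^{n-r}) - \sum_{\ell=1}^N d_{j\ell}(\bm{y}^{n-r})\right),
\end{equation*}
and the invariance relation \eqref{cond:b}, namely $p_{ij}=d_{ji}$ together with $p_{jj}=d_{jj}=0$, makes the two sums cancel. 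Hence the stiff term vanishes identically, independently of $\bm{y}^n$ and of $h$.

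What remains is
\begin{equation*}
\bm{e}^\mathsf{T}\bm{y}^n = \sum_{r=1}^k \alpha_r\,\bm{e}^\mathsf{T}\bm{y}^{n-r} = \eta\sum_{r=1}^k \alpha_r = \eta,
\end{equation*}
where the last equality uses the consistency condition $\sum_{r=1}^k \alpha_r = 1$ from \eqref{eq:Pre_Consistency}. This closes the induction and delivers $\bm{e}^\mathsf{T}(\bm{y}^n-\bm{y}^0)=0$ for all $n\geq 0$.

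I do not anticipate a serious obstacle: the result is essentially an algebraic identity, and the only ingredients required are (i) the column-sum zero property of $M^{n-r}$, which follows transparently from \eqref{cond:b} and from the fact that the Patankar weights $\sigma_j^n$ appear as a right diagonal scaling in \eqref{eq:forma_M} (so they factor out columnwise without disturbing the cancellation), and (ii) the preconsistency $\sum_r\alpha_r=1$ of the underlying LM method. The slightly delicate point is making sure the column---rather than row---structure is exploited correctly: the destruction terms live on the diagonal and must be re-indexed via $p_{ij}=d_{ji}$ to match them against the off-diagonal production entries when summing over $i$.
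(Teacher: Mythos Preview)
Your proposal is correct and follows essentially the same approach as the paper: both arguments show that the Patankar correction term vanishes upon summing over $i$ thanks to \eqref{cond:b}, then invoke $\sum_r\alpha_r=1$ and induction. The only cosmetic difference is that the paper manipulates the double sum $\sum_{i,j}\bigl(p_{ij}(\bm{y}^{n-r})\frac{y_j^n}{\sigma_j^n}-d_{ij}(\bm{y}^{n-r})\frac{y_i^n}{\sigma_i^n}\bigr)=0$ directly, whereas you phrase the same cancellation as the column-sum identity $\bm{e}^\mathsf{T}M^{n-r}=\bm{0}^\mathsf{T}$.
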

\begin{proof}
	It suffices to show that $\bm{e}^\mathsf{T}\bm{y}^{n}=\eta$ for each $n\geq k.$ The conservativity property \eqref{cond:b} yields, independently of $1\leq r \leq k$ and $n\geq k,$ 
    \begin{equation*}
		\begin{split}
			\sum_{i,j=1}^N\left(p_{ij}(\bm{y}^{n-r})\dfrac{y_j^n}{\sigma_j}-d_{ij}(\bm{y}^{n-r})\dfrac{y_i^n}{\sigma_i}\right)
            =\sum_{i,j=1}^N\left(p_{ij}(\bm{y}^{n-r})\dfrac{y_j^n}{\sigma_j}-p_{ji}(\bm{y}^{n-r})\dfrac{y_i^n}{\sigma_i}\right)=0,
		\end{split}
	\end{equation*}
	then, from \eqref{eq:Compact_Expression}, $\bm{e}^\mathsf{T}\bm{y}^{n}=\sum_{r=1}^{k} \alpha_r\left(\bm{e}^\mathsf{T}\bm{y}^{n-r}\right).$ The result is therefore derived from this last relation and the first of \eqref{eq:Pre_Consistency} by inductive arguments. 
\end{proof}

Lemmas \ref{lem:positivity} and \ref{lem:conservativity} establish conditions for the method \eqref{eq:Compact_Expression} to be unconditionally positive and conservative. As a matter of fact, if their hypotheses are fulfilled, the numerical solution computed by the MPLM-$k$ scheme retains, with no restrictions on the step-length $h,$ the properties \eqref{eq: pos_cont} and \eqref{eq:int_primo} of the continuous-time PDS. Therefore to attain these properties we assume, from now on, that the starting values satisfy both \ref{hp:3} and \ref{hp':2}.

\section{Error analysis and convergence}\label{sec:Convergence}
In this section we analyze the error arising from the approximation of the continuous-time solution to \eqref{eq:PDS} by the numerical methods of the class \eqref{eq:Compact_Expression} and investigate the conditions that the Patankar weight denominators have to satisfy for attaining high order of convergence. A recursive practical technique, based on the embedding of different MPLM schemes, is then presented to efficiently compute the PWDs. 

In order to investigate the consistency of the discretization \eqref{eq:Compact_Expression}, we address the corresponding local truncation error, here denoted by 
\begin{equation}\label{eq:Local_Error}
	\begin{split}
		&\delta_i(h; t_n)=y_i(t_n)-\sum_{r=1}^{k}\alpha_ry_i(t_{n-r}) \\
        &-h\sum_{r=1}^{k}\beta_r\sum_{j=1}^{N}\left(p_{ij}(\bm{y}(t_{n-r}))\dfrac{y_j(t_n)}{\sigma_j(\bm{y}(t_{n-1}),\dots,\bm{y}(t_{n-k}))}-d_{ij}(\bm{y}(t_{n-r}))\dfrac{y_i(t_n)}{\sigma_i(\bm{y}(t_{n-1}),\dots,\bm{y}(t_{n-k}))}\right),
	\end{split}
\end{equation}
for $i=1,\dots,N$ and $n\geq k.$ 

The following result provides a condition on the PWDs which ensures the order $p$ consistency of the MPLM methods.

\begin{theorem}\textbf{(Sufficient Condition)}\label{thm:Consis_order_p}
Assume that the given functions describing problem \eqref{eq:PDS} belong to $C^p(\Omega_0),$ with $p\geq 1$ and $\Omega_0=\{z\in \mathbb{R}^N : 0\leq z_i\leq \bm{e}^\mathsf{T}\bm{y}^0,\ i=1,\dots,N\}.$  If the Patankar weight denominators satisfy 
\begin{equation}\label{eq:cond_sigma}
        \sigma_i(\bm{y}(t_{n-1}),\dots,\bm{y}(t_{n-k})) =y_i(t_n)+\mathcal{O}(h^{p}), \qquad i=1,\dots, N, \qquad n\geq k,
\end{equation}
then the MPLM-$k$ method \eqref{eq:Compact_Expression} is consistent with \eqref{eq:PDS}, of order $p$.
\end{theorem}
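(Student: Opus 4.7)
The plan is to compare the MPLM local truncation error $\delta_i(h;t_n)$ with the classical local truncation error associated with the underlying explicit $k$-step LM method, namely
\begin{equation*}
\tau_i(h;t_n)=y_i(t_n)-\sum_{r=1}^{k}\alpha_r y_i(t_{n-r})-h\sum_{r=1}^{k}\beta_r\bigl(P_i(\bm{y}(t_{n-r}))-D_i(\bm{y}(t_{n-r}))\bigr).
\end{equation*}
Standard Taylor expansion of $y_i$ about $t_n$ combined with the order conditions \eqref{eq:Pre_Consistency} immediately yields $\tau_i(h;t_n)=\mathcal{O}(h^{p+1})$ under the $C^p$ smoothness hypothesis. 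So the real work is to estimate the discrepancy $\delta_i-\tau_i$, which isolates precisely the effect of the Patankar transformation on the local error.

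Using $P_i=\sum_j p_{ij}$ and $D_i=\sum_j d_{ij}$ and writing $\sigma_i$ as shorthand for $\sigma_i(\bm{y}(t_{n-1}),\dots,\bm{y}(t_{n-k}))$, the difference telescopes to
\begin{equation*}
\delta_i(h;t_n)-\tau_i(h;t_n)=-h\sum_{r=1}^{k}\beta_r\sum_{j=1}^{N}\left[p_{ij}(\bm{y}(t_{n-r}))\frac{y_j(t_n)-\sigma_j}{\sigma_j}-d_{ij}(\bm{y}(t_{n-r}))\frac{y_i(t_n)-\sigma_i}{\sigma_i}\right].
\end{equation*}
Assumption \eqref{eq:cond_sigma} gives $y_\ell(t_n)-\sigma_\ell=\mathcal{O}(h^p)$ for $\ell=i,j$, while the $C^p$ regularity of $p_{ij}$, $d_{ij}$ on the compact set $\Omega_0$ bounds these functions uniformly. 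Pulling out the outer factor $h$ therefore produces $\delta_i-\tau_i=\mathcal{O}(h^{p+1})$, so that $\delta_i(h;t_n)=\mathcal{O}(h^{p+1})$, which is order-$p$ consistency.

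The one genuinely non-cosmetic point, and the step I would treat most carefully, is bounding the reciprocals $1/\sigma_i$ uniformly in $n$ and $h$. Because \eqref{eq: pos_cont} and continuity of $\bm{y}$ force $y_\ell(t)\geq c>0$ on any fixed compact integration window, and because \eqref{eq:cond_sigma} perturbs $\sigma_\ell$ off $y_\ell(t_n)$ by only $\mathcal{O}(h^p)$, one has $\sigma_\ell\geq c/2$ for all $h$ below an $h_0>0$ depending on $c$ and on the implicit constant in \eqref{eq:cond_sigma}. Thus $1/\sigma_\ell$ stays uniformly bounded, and the factor $(y_\ell(t_n)-\sigma_\ell)/\sigma_\ell$ is genuinely $\mathcal{O}(h^p)$ rather than merely $\mathcal{O}(h^p)/\sigma_\ell$. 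With this safeguard in place the remaining argument is a routine combination of the Taylor remainder for $\tau_i$ with the triangle-inequality bound on the Patankar correction, and the order-$p$ consistency follows.
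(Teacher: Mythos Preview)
Your proof is correct and follows essentially the same route as the paper: decompose $\delta_i$ into the underlying LM local error plus a Patankar correction, then use \eqref{eq:cond_sigma} to show the correction is $\mathcal{O}(h^{p+1})$. If anything you are more careful than the paper, since you make explicit the lower bound on $\sigma_\ell$ (via positivity of $y_\ell$ on a compact interval) that the paper tacitly uses when passing from $y_i(t_n)/(y_i(t_n)+\mathcal{O}(h^p))$ to $1+\mathcal{O}(h^p)$.
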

\begin{proof} 
    It suffices to prove that $\delta_i(h; t_n)=\mathcal{O}(h^{p+1})$ for each $i=1,\dots,N$ and $n\geq k.$ 
    From \eqref{eq:cond_sigma} it follows that
    \begin{equation*}
        \dfrac{y_i(t_n)}{\sigma_i(\bm{y}(t_{n-1}),\dots,\bm{y}(t_{n-k}))}=\dfrac{y_i(t_n)}{y_i(t_n)+\mathcal{O}(h^{p})}=1+\mathcal{O}(h^{p}), \qquad i=1,\dots, N  
    \end{equation*}
    and the local truncation error defined in \eqref{eq:Local_Error} reads, for $n\geq k,$
    \begin{equation*}
            \delta_i(h; t_n)=\delta_i^{LM}(h;t_n)+\mathcal{O}(h^{p+1}), \qquad \qquad i=1,\dots, N,   \qquad \mbox{where}
    \end{equation*}
    \begin{equation}\label{eq:delta_LM}
        \delta_i^{LM}(h;t_n)=y_i(t_n)-\sum_{r=1}^{k}\alpha_ry_i(t_{n-r})-h\sum_{r=1}^{k}\beta_r\sum_{j=1}^{N}\left(p_{ij}(\bm{y}(t_{n-r}))-d_{ij}(\bm{y}(t_{n-r}))\right), 
    \end{equation}
    is the local truncation error of the underlying LM method with coefficients $\bm{\alpha}$ and $\bm{\beta}.$ Finally, since from \eqref{eq:Pre_Consistency} $\delta_i^{LM}(h;t_n)=\mathcal{O}(h^{p+1}),$ we get the result.
\end{proof}

Theorem \ref{thm:Consis_order_p} guarantees the consistency of the MPLM methods when the PWDs are selected to fulfil \eqref{eq:cond_sigma}. The following result, whose proof is reported in \hyperref[sec:Conclusions]{Appendix A}, states that this condition represents the minimum and the least stringent requirement for attaining the order $p$ consistency, as it constitutes a necessary prerequisite.

\begin{theorem}\textbf{(Necessary Condition)}\label{Them_Con_Neces}
    Assume that the MPLM-$k$ method \eqref{eq:Compact_Expression} is consistent with \eqref{eq:PDS} of order $p\geq 1.$ Then the Patankar weight denominators satisfy \eqref{eq:cond_sigma}.
\end{theorem}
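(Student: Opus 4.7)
The plan is to reverse the derivation used in the proof of Theorem~\ref{thm:Consis_order_p}. Subtracting the underlying linear multistep local truncation error \eqref{eq:delta_LM} from the MPLM-$k$ local truncation error \eqref{eq:Local_Error} yields
\begin{equation*}
\delta_i(h;t_n) - \delta_i^{LM}(h;t_n) = -h \sum_{r=1}^k \beta_r \sum_{j=1}^N \Bigl[ p_{ij}(\bm{y}(t_{n-r})) \Bigl(\frac{y_j(t_n)}{\sigma_j}-1\Bigr) - d_{ij}(\bm{y}(t_{n-r})) \Bigl(\frac{y_i(t_n)}{\sigma_i}-1\Bigr) \Bigr].
\end{equation*}
By the hypothesis $\delta_i(h;t_n) = \mathcal{O}(h^{p+1})$ and the linear multistep consistency relations \eqref{eq:Pre_Consistency} giving $\delta_i^{LM}(h;t_n) = \mathcal{O}(h^{p+1})$, the bracket on the right-hand side must be $\mathcal{O}(h^p)$ after dividing through by $h$, for each index $i=1,\dots,N$.

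Setting $\rho_j(h) = y_j(t_n)/\sigma_j - 1$, I would recast this constraint as a linear system $A(h)\,\bm{\rho}(h) = \mathcal{O}(h^p)$, with $A(h) = \sum_{r=1}^k \beta_r \bigl(P(\bm{y}(t_{n-r})) - \diag(D(\bm{y}(t_{n-r}))\bm{e})\bigr)$. Since $\sigma_j$ is unconditionally positive and the PDS data are smooth, $\sigma_j$ remains bounded away from zero for $h$ small, so the target estimate \eqref{eq:cond_sigma} is equivalent to $\rho_j(h) = \mathcal{O}(h^p)$. I would then proceed by induction on $q\in\{0,1,\dots,p-1\}$, at each step expanding $A(h)$ and $\bm{\rho}(h)$ in Taylor series around $h=0$ and matching the coefficient of $h^q$. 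This produces a recursive relation in which the coefficient $\bm{\rho}^{(q)}(0)$ is driven by the leading operator $A(0) = \bigl(\sum_r\beta_r\bigr)\,M_0(\bm{y}(t_n))$, where $M_0(\bm{y}) := P(\bm{y}) - \diag(D(\bm{y})\bm{e})$, with the right-hand side already vanishing by the inductive hypothesis.

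The hard part will be the singularity of $A(0)$ on a fully conservative production-destruction system, since \eqref{cond:b} forces $\bm{e}^\mathsf{T} M_0(\bm{y}) = \bm{0}$ and therefore one kernel direction cannot be controlled solely from the algebraic identity $A(0)\,\bm{\rho}^{(q)}(0) = \textrm{(known)}$. The plan to handle this is to read the assumption "consistent with \eqref{eq:PDS}" uniformly across the class of production-destruction systems: by probing the hypothesis with several PDSs sharing the same trajectory through $(t_n,\bm{y}(t_n))$ but having different matrices $P,D$, one collects additional independent linear constraints on $\bm{\rho}^{(q)}(0)$ that pin down the missing kernel component. Iterating up to $q=p-1$ delivers $\rho_j(h) = \mathcal{O}(h^p)$, from which \eqref{eq:cond_sigma} follows via $\sigma_j - y_j(t_n) = -\,y_j(t_n)\,\rho_j/(1+\rho_j) = \mathcal{O}(h^p)$.
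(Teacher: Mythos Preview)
Your opening move---subtracting $\delta_i^{LM}$ from $\delta_i$ and dividing by $h$---is exactly what the paper does. The divergence is in how $\rho_i = \mathcal{O}(h^p)$ is extracted from the resulting relation.

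The paper bypasses the singular-matrix difficulty altogether by exhibiting, for each target index $i^*$, a particular two-constituent ``flow'' PDS in which $p_{i^* j}\equiv 0$ for every $j$ and the only nonzero destruction entry in row $i^*$ is $d_{i^* j^*}(\bm{y}) = \mu\, y_{i^*}$. For that system the $i^*$-th equation of your identity collapses to
\[
\mu \Bigl(\sum_{r=1}^k \beta_r\, y_{i^*}(t_{n-r})\Bigr)\Bigl(1 - \frac{y_{i^*}(t_n)}{\sigma_{i^*}}\Bigr) = \mathcal{O}(h^p),
\]
and since the scalar prefactor is bounded away from zero one reads off $\rho_{i^*} = \mathcal{O}(h^p)$ immediately---no matrix inversion, no kernel analysis, no Taylor induction. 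Letting $i^*$ range over $\{1,\dots,N\}$ finishes the argument.

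Your plan is not wrong in spirit: you correctly recognise that the conservativity kernel $\bm{e}^\mathsf{T} A = \bm{0}$ forces one to exploit the freedom in choosing the PDS. But the machinery you propose is heavier than necessary and leaves a real gap. First, the Taylor-coefficient induction on $q$ is superfluous: once enough matrices $A$ with trivially intersecting kernels and a uniformly bounded joint left inverse are in hand, $\bm{\rho}=\mathcal{O}(h^p)$ follows in a single step. Second---and this is the substantive issue---you leave the construction of the probing PDSs entirely unspecified. You would need to prove that, for an \emph{arbitrary} prescribed trajectory, one can always manufacture sufficiently many fully conservative PDSs (same $\bm{y}(t)$, different $P,D$) whose $A$-matrices produce independent constraints on $\bm{\rho}$; this is plausible (e.g.\ by inserting balanced cycles $p_{ij}\mapsto p_{ij}+c$, $p_{ji}\mapsto p_{ji}+c$, which preserve the trajectory while perturbing $A$), but it is a genuine piece of work that your proposal does not carry out. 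The paper's explicit choice of test problems renders all of this moot by making one row of the system scalar from the outset.
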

    
To investigate the convergence properties of the scheme \eqref{eq:Compact_Expression}, we first establish some preparatory results. In what follows, given $\Omega\subset \mathbb{R}^N,$ we denote by $\Omega^l=\Omega \times \dots \times \Omega,$ $l$ times.
    \begin{lemma}\label{lem:diff}
        Let $l\geq 1$ be a positive integer and $\Omega$ be a compact subset of $\mathbb{R}^{N}.$ Consider, for $1\leq i,j \leq N$ and $\bm{x}=(\bm{x}^1,\dots,\bm{x}^l)\in \Omega^l,$ the functions 
        \begin{align*}
            &\sigma_{i}(\bm{x})\in C^1(\Omega^l), \; \mbox {such that} \;\; \sigma_i(\bm{x})>0,  \\
            & A(\bm{x})=\{a_{ij}(\bm{x}) \} \; \mbox{with} \; a_{ij}(\bm{x})\in C^1(\Omega^l), \; \mbox {such that the matrix} \;  I-A(\bm{x}) \; \mbox{is invertible,}    
        \end{align*}  
        independently of $\bm{x}.$ Then, the functions 
        \begin{itemize}
            \item[] $Z: \bm{x}\in\Omega^l \to \diag\left(\bm{S}(\bm{x})\right) \in \mathbb{R}^{N\times N},$ with $\bm{S}(\bm{x})=\left(\frac{1}{\sigma_1(\bm{x})},\dots,\frac{1}{\sigma_N(\bm{x})}\right)^\mathsf{T}\in \mathbb{R}^{N},$
            \item[] $F: \bm{x}\in\Omega^l \to (I-A(\bm{x}))^{-1} \in \mathbb{R}^{N\times N},$
            \item[] $\bm{g}: \bm{x}\in\Omega^l \to F(\bm{x}) \sum_{r=1}^l\alpha_r \bm{x}^r \in \mathbb{R}^{N},$ 
        \end{itemize}
        are continuously differentiable on $\Omega^l.$
    \end{lemma}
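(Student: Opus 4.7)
The plan is to handle the three maps separately, since each reduces to a standard composition of $C^1$ operations once the right identity is chosen.

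For $Z$, the key observation is that $\Omega^l$ is compact and $\sigma_i$ is continuous and strictly positive on it, so $\sigma_i$ attains a positive minimum there and is bounded away from zero. Hence $\bm{x}\mapsto 1/\sigma_i(\bm{x})$ is the composition of the $C^1$ map $\sigma_i$ with the $C^1$ scalar map $t\mapsto 1/t$ on a neighborhood of $\sigma_i(\Omega^l)\subset(0,\infty)$, and therefore belongs to $C^1(\Omega^l)$. Placing these scalar functions on the diagonal and zero elsewhere gives a matrix-valued map whose entries are all $C^1$, so $Z$ is continuously differentiable.

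For $F$, I would use Cramer's rule, which expresses the inverse as
\begin{equation*}
(I-A(\bm{x}))^{-1}=\frac{1}{\det(I-A(\bm{x}))}\,\mathrm{adj}(I-A(\bm{x})).
\end{equation*}
The entries of the adjugate and the determinant are polynomials (with constant coefficients) in the entries of $I-A(\bm{x})$, hence in the functions $a_{ij}(\bm{x})$. Since each $a_{ij}$ lies in $C^1(\Omega^l)$ and the determinant is nonzero on $\Omega^l$ by the invertibility hypothesis, every entry of $F(\bm{x})$ is a quotient of $C^1$ functions with non-vanishing denominator, and the quotient rule gives $F\in C^1(\Omega^l)$.

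For $\bm{g}$, the map $\bm{x}\mapsto\sum_{r=1}^l\alpha_r\bm{x}^r$ is linear in $\bm{x}\in\Omega^l$ and therefore $C^\infty$; combined with $F\in C^1$ from the previous step, the Leibniz rule applied entrywise yields $\bm{g}\in C^1(\Omega^l)$.

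The only non-routine step is the one for $F$: one has to pick an efficient route to differentiability of matrix inversion. The Cramer-rule argument above is the cleanest because it turns the question into quotients of polynomial expressions in $C^1$ functions, using the non-vanishing of $\det(I-A(\bm{x}))$ already supplied by the hypothesis. An implicit alternative, differentiating $F(\bm{x})(I-A(\bm{x}))=I$ to obtain $\partial_{x_k}F=F(\partial_{x_k}A)F$, also works but needs continuity of the right-hand side to be checked separately; I would prefer the Cramer formulation for brevity.
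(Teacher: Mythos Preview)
Your proposal is correct and follows essentially the same route as the paper: for $Z$ you use the reciprocal of the positive $C^1$ functions $\sigma_i$, for $F$ you invoke Cramer's rule (adjugate over determinant) exactly as the paper does, and for $\bm{g}$ you combine $F$ with the linear map via the product rule. The only cosmetic difference is that the paper writes out the explicit partial-derivative formulas for $Z$ and $\bm{g}$, whereas you argue by composition and Leibniz; the substance is the same.
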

    \begin{proof}
         The first statement comes from 
         \begin{align*}
             &\dfrac{\partial Z}{\partial x^v_j}(\bm{x}^1,\dots,\bm{x}^l)=-\diag\left(\bar{\bm{S}}(\bm{x})\right) \diag\left(\hat{\bm{S}}^{vj}(\bm{x})\right), \qquad\quad
             \begin{array}{l}
                  j=1,\dots,N,  \\
                  v=1,\dots,l, 
             \end{array}
             \qquad \ \mbox{with}
         \\
             &\bar{\bm{S}}(\bm{x})=\left(\dfrac{1}{\sigma_1^2(\bm{x})},\dots,\dfrac{1}{\sigma_N^2(\bm{x})}\right)^\mathsf{T}\in \mathbb{R}^{N}, \;\;\; \hat{\bm{S}}^{vj}(\bm{x})=\left(\dfrac{\partial \sigma_1(\bm{x})}{\partial x^v_j},\dots,\dfrac{\partial \sigma_N(\bm{x})}{\partial x^v_j}\right)^\mathsf{T} \in \mathbb{R}^{N}.
         \end{align*}         
         Denote with $\bar{A}(\bm{x})$ the adjoint matrix of $I-A(\bm{x}),$ whose entries are continuous functions of the coefficients $a_{ij}(\bm{x})$. 
         Because of the assumptions, $\bar{A}(\bm{x})$ and $D_A : \bm{x} \in \Omega^l \to 1/\det{A(\bm{x})}\in \mathbb{R}$ are well posed and continuously differentiable functions. Therefore, from $F(\bm{x})=D_A(\bm{x})\bar{A}(\bm{x}),$ it follows $F\in C^1(\Omega^l).$ Finally, 
         \begin{equation*}
             \dfrac{\partial \bm{g}}{\partial x^v_j}(\bm{x}^1,\dots,\bm{x}^l)=\dfrac{\partial F}{\partial x^v_j}(\bm{x})\sum_{r=1}^l\alpha_r \bm{x}^r+\alpha_v (F_{1j}(\bm{x}),\dots,F_{Nj}(\bm{x}))^\mathsf{T} , \qquad
             \begin{array}{l}
                  j=1,\dots,N,  \\
                  v=1,\dots,l, 
             \end{array}
         \end{equation*}
         yields the result.
    \end{proof}

    The following result, whose proof is deducted from the arguments in \cite[sec. 3.4]{Lambert_Vecchio}, facilitates the analysis of the approximation error of \eqref{eq:Compact_Expression}.  

    \begin{lemma}\label{lem:Limite_Successione}
        Let $l\geq 1$ be a positive integer. Consider a sequence of non-negative numbers $\{a_n\}_{n\in \mathbb{N}_0}$ and assume that there exist $b\geq 0$ and $c_r\geq 0,$ $r=1,\dots,l,$ such that $c=\sum_{r=1}^l c_{r}> 1$ and  $a_n\leq b + \sum_{r=1}^l c_{r}a_{n-r}$ for $n\geq l.$ Then
        \begin{equation}\label{eq: bound_del_lemma}
            a_n\leq \left(a^*+\dfrac{b}{c-1}\right)\exp\left(n(c-1)\right), \qquad \qquad \qquad n\geq 0,
        \end{equation}  
        where $a^*=\max_{0\leq j \leq l-1}a_j.$
    \end{lemma}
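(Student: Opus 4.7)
The plan is to follow the standard route for a discrete Gronwall-type inequality: majorize the inequality by the corresponding equality, translate the resulting sequence to kill the forcing term $b$, bound the homogeneous sequence by a geometric rate $c^n$, and conclude via $1+x\le e^x$.

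First, I would introduce the comparison sequence $\{u_n\}_{n\in\mathbb{N}_0}$ defined by $u_j=a^*$ for $0\le j\le l-1$ and, for $n\ge l$,
\begin{equation*}
u_n=b+\sum_{r=1}^{l}c_r u_{n-r}.
\end{equation*}
Since $a_j\le a^*=u_j$ for $j=0,\dots,l-1$ and since all $c_r\ge 0$, a straightforward strong induction on $n$ gives $a_n\le u_n$ for every $n\ge 0$. It then suffices to bound $u_n$.

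Next, to absorb the constant $b$, I would set $v_n=u_n+b/(c-1)$, which is well defined thanks to the hypothesis $c>1$. Using $\sum_{r=1}^l c_r=c$, a direct computation shows
\begin{equation*}
v_n=b+\sum_{r=1}^l c_r\Bigl(v_{n-r}-\tfrac{b}{c-1}\Bigr)+\tfrac{b}{c-1}=\sum_{r=1}^{l}c_r v_{n-r}, \qquad n\ge l,
\end{equation*}
with initial data $v_j=a^*+b/(c-1)$ for $0\le j\le l-1$. Introducing $M_n=\max_{0\le j\le n}v_j$, the non-negativity of the $c_r$ gives $v_n\le c\,M_{n-1}$ for $n\ge l$, and since $c>1$ we have $M_n=\max(M_{n-1},v_n)\le c\,M_{n-1}$. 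Iterating from $n=l-1$, where $M_{l-1}=a^*+b/(c-1)$, yields $v_n\le c^n\bigl(a^*+b/(c-1)\bigr)$ for all $n\ge 0$.

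Finally, applying the elementary inequality $1+x\le e^x$ with $x=c-1>0$ gives $c^n\le\exp(n(c-1))$, and combining with $a_n\le u_n\le v_n$ delivers \eqref{eq: bound_del_lemma}. No step looks like a serious obstacle; the only mild subtleties are checking that the translation $v_n=u_n+b/(c-1)$ indeed kills the constant term (which is exactly why the hypothesis $c>1$, rather than $c\ne 1$, is imposed, ensuring both well-posedness and that the factor $c^n$ is actually increasing so the bound is meaningful) and keeping track of the initial segment $0\le n\le l-1$ in the recursion for $M_n$.
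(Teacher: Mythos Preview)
Your argument is correct. Both proofs end at the same inequality $a_n\le c^n\bigl(a^*+b/(c-1)\bigr)$ followed by $c^n\le e^{n(c-1)}$, but they reach it by different routes. The paper works directly on $a_n$: it proves by strong induction that $a_n\le c^n a^*+b\sum_{j=0}^{n-1}c^j$, then sums the geometric series. You instead pass to the exact comparison sequence $u_n$, translate by $b/(c-1)$ to obtain a homogeneous linear recurrence for $v_n$, and bound that via the running maximum $M_n$. Your decomposition is a bit more structured and makes the role of the hypothesis $c>1$ (needed both for the translation and for $M_n\le cM_{n-1}$) more transparent; the paper's single induction is shorter and keeps the forcing term explicit throughout. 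One very minor point: your iteration of $M_n\le cM_{n-1}$ from $n=l-1$ actually gives $v_n\le c^{n-l+1}\bigl(a^*+b/(c-1)\bigr)$, which you then relax to $c^n$ using $l\ge 1$; this is fine but worth stating explicitly.
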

    \begin{proof}
        For $n=0,\dots, l-1,$ the bound \eqref{eq: bound_del_lemma} directly follows from the definition of $a^*$ and the non-negativity of the termes involved. To prove it for $n\geq l,$ we firstly show by induction that  $a_n\leq c^na^*+b\sum_{j=0}^{n-1}c^j.$ When $n=l,$ from $a_l\leq ca^*+b$ and $l\geq 1,$ we get the result. Consider now $n>l$ and assume that the statement holds for $j=l,\dots, n-1.$ It follows that
        \begin{equation*}
                a_n\leq b+\sum_{r=1}^l c_{r}\left(c^{n-r}a^*+b\sum_{j=0}^{n-r-1}c^j\right)\leq b+\sum_{r=1}^l c_{r}\left(c^{n-1}a^*+b\sum_{j=0}^{n-2}c^j\right)\leq c^na^*+b\sum_{j=0}^{n-1}c^j.
        \end{equation*}
        Therefore, it turns out that $a_n\leq c^na^*+b (c^n-1)/(c-1)$ for $n\geq l.$ Finally $c^n\leq exp(n(c-1)),$ which completes the proof.
    \end{proof}

    A thorough analysis of the global discretization error of \eqref{eq:Compact_Expression} leads to the following convergence result. 
    \begin{theorem}\label{thm:Convergence}
		Let $\bm{y}(t)$ be the continuous-time solution to \eqref{eq:sistema_compatto} for $t\in [0,T],$ with $T>0$ and let $\{\bm{y}^{n}\}_{n\geq 0}$ be its approximation computed by the $k$-steps MPLM scheme \eqref{eq:Compact_Expression} with $h=T/\bar{n}.$ Define
        \begin{equation}\label{eq:def_Omega}
            \Omega=\left\{\bm{x}\in \mathbb{R}^N : \mu \leq x_i\leq \bm{e}^\mathsf{T}\bm{y}^0,\; i=1,\dots,N\right\},
        \end{equation} 
        with $\mu$ positive constant. Assume that the given functions describing problem \eqref{eq:PDS} belong to $C^p(\Omega),$ with $p\geq 1$ and that
		\begin{itemize}
			\item the starting values satisfy $\|\bm{y}(t_m)-\bm{y}^m\|=\mathcal{O}(h^p),$ $m=0,\dots,k-1;$
            \item the PWDs are continuously differentiable functions on $\Omega^k$ and satisfy  \eqref{eq:cond_sigma}.
		\end{itemize}
		Then, the method \eqref{eq:Compact_Expression} is convergent of order $p.$
	\end{theorem}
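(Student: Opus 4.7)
The plan is to derive a discrete Gronwall-type recursion for the global error $\bm{e}^n := \bm{y}(t_n) - \bm{y}^n$ and close it via Lemma~\ref{lem:Limite_Successione}. The first step is to rewrite \eqref{eq:Compact_Expression} in one-step form. Because the matrix $I-h\sum_{r=1}^k\beta_r M^{n-r}$ is a nonsingular M-matrix (by the argument already used in Lemma~\ref{lem:positivity}), the scheme reads $\bm{y}^n = \bm{g}(\bm{y}^{n-1},\ldots,\bm{y}^{n-k})$ with $\bm{g}$ exactly as in Lemma~\ref{lem:diff}, built from the PWDs and from the production/destruction rates evaluated at $\bm{y}^{n-1},\ldots,\bm{y}^{n-k}$. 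Applying the same one-step map to the exact trajectory and comparing with \eqref{eq:Local_Error} yields
\begin{equation*}
    \bm{y}(t_n) \;=\; \bm{g}\bigl(\bm{y}(t_{n-1}),\ldots,\bm{y}(t_{n-k})\bigr) \;+\; \bm{\tau}(h;t_n),
\end{equation*}
where $\bm{\tau}(h;t_n)$ is obtained by multiplying the componentwise local truncation error $\bm{\delta}(h;t_n)$ by the inverse of the implicit part evaluated at exact values. By Theorem~\ref{thm:Consis_order_p} combined with hypothesis \eqref{eq:cond_sigma}, $\|\bm{\delta}(h;t_n)\|=\mathcal{O}(h^{p+1})$, and since the M-matrix inverse is uniformly bounded on $\Omega^k$, the residual satisfies $\|\bm{\tau}(h;t_n)\|=\mathcal{O}(h^{p+1})$ uniformly for $k\le n\le \bar n$.

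Next I would apply the mean value theorem to $\bm{g}$. By Lemma~\ref{lem:diff} and the $C^1$ hypothesis on the PWDs, $\bm{g}\in C^1(\Omega^k)$, and by compactness of $\Omega$ its partial derivatives are uniformly bounded there. Moreover, $F(\bm{x})\to I$ and $\partial F/\partial x^v_j\to 0$ as $h\to 0$, so the explicit expression for $\partial\bm{g}/\partial\bm{x}^r$ written at the end of the proof of Lemma~\ref{lem:diff} furnishes Lipschitz constants of the form $c_r = \alpha_r + h L_r$ with $L_r\ge 0$ independent of $h$ and $n$. Subtracting the two representations above, taking norms, and using $\sum_{r=1}^k\alpha_r=1$, leads to
\begin{equation*}
    \|\bm{e}^n\| \;\le\; K h^{p+1} + \sum_{r=1}^k c_r\,\|\bm{e}^{n-r}\|,\qquad \sum_{r=1}^k c_r = 1 + C h,
\end{equation*}
with $K, C>0$ independent of $h$ and $n$. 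Lemma~\ref{lem:Limite_Successione} then yields
\begin{equation*}
    \|\bm{e}^n\| \;\le\; \Bigl(\max_{0\le m\le k-1}\|\bm{e}^m\| \,+\, \tfrac{K h^{p}}{C}\Bigr)\exp(n C h) \;\le\; \mathcal{O}(h^p)\exp(C T),
\end{equation*}
upon invoking $n h\le T$ and the hypothesis $\|\bm{e}^m\|=\mathcal{O}(h^p)$ on the starting values, which is the claimed order-$p$ convergence.

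The main obstacle is that the $C^1$ estimates above presuppose that the iterates $\bm{y}^n$ lie in $\Omega$, whereas Lemma~\ref{lem:positivity} only guarantees strict positivity, without a uniform positive lower bound. I would handle this by a bootstrap. Set $\mu := \tfrac12 \min_{t\in[0,T],\,1\le i\le N} y_i(t)$, which is strictly positive by \eqref{eq: pos_cont} and continuity on $[0,T]$, so that $\bm{y}(t_n)\in\Omega$ for every $n$. Assuming inductively that $\bm{y}^m\in\Omega$ for $m<n$ (the base case is ensured by the closeness of the starting values to the exact ones, for $h$ small), the one-step recursion and the estimate above give $\|\bm{e}^n\|=\mathcal{O}(h^p)<\mu$ for $h$ sufficiently small, whence $y_i^n\ge y_i(t_n)-\mu\ge\mu$; the componentwise upper bound $y_i^n\le \bm{e}^\mathsf{T}\bm{y}^0$ is automatic from Lemmas~\ref{lem:positivity} and \ref{lem:conservativity}. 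Once this invariance is secured, the Gronwall bound is valid for every $k\le n\le \bar n$ and the proof is complete.
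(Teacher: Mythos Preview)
Your argument is correct and follows essentially the same route as the paper: derive a Lipschitz-type error recursion via the mean value theorem (Lemma~\ref{lem:diff}) and close it with the discrete Gronwall bound of Lemma~\ref{lem:Limite_Successione}. The only organizational difference is that you invert the linear system first and work with the explicit map $\bm{g}:\Omega^k\to\mathbb{R}^N$, whereas the paper keeps the implicit form, introduces auxiliary maps $\bm{G}^{nr}:\Omega^{k+1}\to\mathbb{R}^N$ that carry $\bm{y}^n$ as an extra argument, and then moves the resulting $\|\bm{e}(h;t_n)\|$ term to the left, picking up a factor $(1-hB)^{-1}$; your $c_r=\alpha_r+hL_r$ and the paper's $(\alpha_r+hB)/(1-hB)$ play the same role. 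Your bootstrap for the invariance $\bm{y}^n\in\Omega$ is in fact more careful than the paper, which simply asserts the existence of a suitable $\mu$ from positivity (Lemma~\ref{lem:positivity}) without arguing that the lower bound is uniform in $h$.
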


    \begin{proof}
        Because of the properties of the continuous and the numerical solution to \eqref{eq:sistema_compatto} outlined in \eqref{eq: pos_cont} and Lemma \ref{lem:positivity}, there exists $\mu>0$ such that $\bm{y}(t)$ and $\bm{y}^{n}$ belong to $\Omega,$ for all $t\geq0$ and $n=0,\dots,\bar{n}.$
        Define, for $\bm{x}=(\bm{x}^1,\dots,\bm{x}^k)\in \Omega^k$ and $\bm{S}^n(\bm{x})=\left(\frac{1}{\sigma_1^n(\bm{x})},\dots,\frac{1}{\sigma_N^n(\bm{x})}\right)^\mathsf{T}\in \mathbb{R}^{N},$ the functions 
        \begin{equation*}
            \begin{split}
                \Phi^{nr} &: \bm{x}\in \Omega^k \ \to \ (P(\bm{x}^{r})-\diag(D(\bm{x}^{r})\bm{e})) \diag\left(\bm{S}^n(\bm{x}) \right)\in \mathbb{R}^{N\times N} \\
                \bm{G}^{nr} &:  (\bm{x},\bm{x}^{k+1})\in \Omega^{k+1} \ \to \ \Phi^{nr}(\bm{x})\ \bm{x}^{k+1}\in \mathbb{R}^N, \qquad \qquad \qquad \begin{array}{r}
                     r=1,\dots,k,  \\
                     n=k,\dots,\bar{n}.
                \end{array}
             \end{split}
         \end{equation*}
        The global discretization error $\bm{e}(h;t_n)=\bm{y}(t_n)-\bm{y}^{n}$ then satisfies 
        \begin{equation}\label{eq:global_err_1}
        \begin{split}            
            \bm{e}(h;t_n)=&\sum_{r=1}^k\alpha_r\bm{e}(h;t_{n-r})+h\sum_{r=1}^k\beta_r \left(\bm{G}^{nr}(\bm{y}(t_{n-k}),\dots,\bm{y}(t_{n}))-\bm{G}^{nr}(\bm{y}^{n-k},\dots,\bm{y}^{n}) \right) \\
            &+\bm{\delta}(h;t_n), \qquad \qquad \qquad\qquad \qquad \qquad\qquad \qquad \qquad\qquad \!\!  n=k,\dots,\bar{n},
        \end{split}
        \end{equation}
        where the components of $\bm{\delta}(h;t_n)=(\delta_1(h;t_n),\dots,\delta_N(h;t_n))^\mathsf{T}\in \mathbb{R}^N$ are defined in \eqref{eq:Local_Error}. Because of the regularity assumptions on the known functions and the first result of Lemma \ref{lem:diff}, $\Phi^{nr}\in C^1(\Omega^k)$ and $\bm{G}^{nr}\in C^1(\Omega^{k+1}),$ for each $r=1,\dots,k$ and $n=k,\dots,\bar{n}.$ Therefore, from the mean value theorem 
        \begin{equation}\label{eq:global_err_2}
            \begin{split}
                \bm{G}^{nr}(\bm{y}(t_{n-k}),\dots,\bm{y}(t_{n}))-\bm{G}^{nr}(\bm{y}^{n-k},\dots,\bm{y}^{n})&=J_{\bm{G}^{nr}}(\bm{\xi}^{n-k}_r,\dots,\bm{\xi}^{n}_r)
                \begin{pmatrix}
                    \bm{e}(h;t_{n-k})  \\ \vdots \\ \bm{e}(h;t_n)
                \end{pmatrix},
            \end{split}
        \end{equation}
        where 
        $J_{\bm{G}^{nr}}(\bm{\xi}^{n-k}_r,\dots,\bm{\xi}^{n}_r)\in \mathbb{R}^{N\times (Nk+N)}
        $ is the Jacobian of $\bm{G}^{nr}.$ 
        Let 
        \begin{equation*}
            J_{\bm{G}^{nr}}(\bm{\xi}^{n-k}_r,\dots,\bm{\xi}^{n}_r)=\bigg( \underbrace{ J_{\bm{G}^{nr}}^{(0)}(\bm{\xi}^{n-k}_r,\dots,\bm{\xi}^{n}_r)}_{N\times N} \; \bigg| \; \dots \; \bigg| \; \underbrace{J_{\bm{G}^{nr}}^{(k)}(\bm{\xi}^{n-k}_r,\dots,\bm{\xi}^{n}_r)}_{N\times N} \bigg).
        \end{equation*}
        Then, for each $n=k,\dots, \bar{n}$ and $r=1,\dots,k,$
        \begin{equation}\label{eq: J_bound}
            \|J_{\bm{G}^{nr}}^{(j)}(\bm{\xi}^{n-k}_r,\dots,\bm{\xi}^{n}_r)\|\leq \|J_{\bm{G}^{nr}}(\bm{\xi}^{n-k}_r,\dots,\bm{\xi}^{n}_r)\| \leq J, \qquad\quad j=0,\dots,k,
        \end{equation}
        with $J$ positive constant depending on the bounds of the known functions and their derivatives on $\Omega^{k+1}.$ Substituting \eqref{eq:global_err_2} into \eqref{eq:global_err_1} leads to the discrete equation
        \begin{equation*}
                \bm{e}(h;t_n)=\sum_{r=1}^k\alpha_r\bm{e}(h;t_{n-r})+h\sum_{r=1}^k\beta_r\sum_{j=0}^k J_{\bm{G}^{nr}}^{(j)}(\bm{\xi}^{n-k}_r,\dots,\bm{\xi}^{n}_r) \
                \bm{e}(h;t_{n-j})+\bm{\delta}(h;t_n), 
        \end{equation*}
        for $n=k,\dots, \bar{n}.$  Denoted $B=J\sum_{r=1}^k\beta_r,$ for a sufficiently small $h$, from \eqref{eq: J_bound}
        \begin{equation*}
                \|\bm{e}(h;t_n)\|\leq \sum_{r=1}^k\dfrac{\alpha_r+hB}{1-hB}\|\bm{e}(h;t_{n-r})\|+\underset{0\leq n \leq \bar{n}}{\max}\dfrac{\|\bm{\delta}(h;t_n)\|}{1-hB}, \qquad \quad \quad n=k,\dots, \bar{n}. 
        \end{equation*}
        Because of the first relation in \eqref{eq:Pre_Consistency}, $\sum_{r=1}^k\frac{\alpha_r+hB}{1-hB}=1+h \frac{(k+1)B}{1-hB}> 1,$ so that from Lemma \ref{lem:Limite_Successione} it follows 
        \begin{equation*}
            \|\bm{e}(h;t_n)\|\leq \left(\underset{0\leq m \leq k-1}{\max}\|\bm{y}(t_m)-\bm{y}^m \| + \dfrac{\max_{0\leq n \leq \bar{n}}\|\bm{\delta}(h;t_n)\|}{h \ (k+1)B} \right) \exp\left(\frac{(k+1)TB}{1-hB}\right),
        \end{equation*}
        for $0\leq n \leq \bar{n}$ and $T=\bar{n}h.$ Because of the assumptions on the initial values and the order $p$ consistency of \eqref{eq:Compact_Expression} (Theorem \ref{thm:Consis_order_p}), we have 
        \begin{equation*}
             \max_{0\leq n \leq \bar {n}}\|\bm{e}(h;t_n)\| \leq C h^p,
         \end{equation*}
        with $C$ positive constant not depending on $h,$ which yields the result.
    \end{proof}
    
    \subsection{The $\sigma$-embedding technique} \label{subsec:sigma_embedding}
     Theorem \ref{thm:Convergence} provides sufficient conditions for the order $p$ convergence of the numerical method \eqref{eq:Compact_Expression}, but give no clues on how to compute unconditionally positive and conservative Patankar weight denominators satisfying \eqref{eq:cond_sigma}. 
     To achieve this goal, here we introduce an embedding technique based on a recursive use of MPLM methods. More specifically, for $p=1,$ we just consider the Modified Patankar Euler (MPE) method (see \cite{MPEuler} for further details), corresponding to \eqref{eq:Compact_Expression} with
    \begin{equation}\label{eq:MPEul}
        k=1, \qquad \alpha_1=\beta_1=1, \qquad \sigma_i^n=y_i^{n-1}, \quad \mbox{for} \quad n\geq 1, \quad i=1,\dots,N,
    \end{equation}
    and then investigate MPLM-$k$ schemes for $p\geq 2$ and $k>1.$ Let $\bm{\sigma}^{n}=\bm{\sigma}^{n(p-1)}$ be PWDs satisfying the condition \eqref{eq:cond_sigma}. Our approach consists in recursively computing $\bm{\sigma}^{n(p-1)}$ by a $\tilde{k}\leq k$ steps, order $p-1$ convergent MPLM-$\tilde{k}$ method, whose coefficients are denoted by $\bm{\alpha}^{(p-1)}\in \mathbb{R}^{\tilde{k}}$ and $\bm{\beta}^{(p-1)}\in \mathbb{R}^{\tilde{k}}$, as follows
    \begin{equation}\label{eq:sigma_embedding}
        \bm{\sigma}^{n(p-1)}(\bm{y}^{n-1 (p)},\dots,\bm{y}^{n-\tilde{k}(p)})= \left(I-h\sum_{r=1}^{\tilde{k}}\beta_r^{(p-1)}M^{n-r(p)}\right)^{-1}  \sum_{r=1}^{\tilde{k}}\alpha_r^{(p-1)}\bm{y}^{n-r (p)}, 
    \end{equation}
    for $n\geq k,$ where
    \begin{equation*}
        M^{n-r(p)}=\left( P(\bm{y}^{n-r(p)})-\diag(D(\bm{y}^{n-r(p)})\bm{e}) \right) \diag(\bm{S}^{n(p-2)})\in \mathbb{R}^{N\times N}, \quad r=1,\dots,k
    \end{equation*}
    and $\bm{S}^{n(p-2)}=(1/\sigma_1^{n(p-2)},\dots,1/\sigma_N^{n(p-2)})^\mathsf{T}\in \mathbb{R}^{N}.$ Here $\bm{y}^{n(p)},$ $n\geq k,$ represents the numerical solution computed by the MPLM-$k$ method.

    With the following result we prove that the PWDs computed adopting the $\sigma$-embedding technique \eqref{eq:sigma_embedding} meet the condition \eqref{eq:cond_sigma}, which is sufficient for to the convergence of the MPLM-$k$ scheme.
    \begin{theorem}\label{thm:sigma_embedding}
        Let $\bm{y}(t)$ be the continuous-time solution to \eqref{eq:sistema_compatto} for $t\in [0,T],$ with $T>0$ and let $\{\bm{y}^{n(p)}\}_{n\geq 0}$ be its approximation computed by the $k$-steps MPLM scheme \eqref{eq:Compact_Expression} with $h=T/\bar{n}.$ Assume that the given functions describing problem \eqref{eq:PDS} belong to $C^p(\Omega),$ with $p\geq 1$ and $\Omega$ in \eqref{eq:def_Omega}. 
        Assume that
        \begin{itemize}
        \item the starting values satisfy $\|\bm{y}(t_m)-\bm{y}^{m(p)}\|=\mathcal{O}(h^p),$ $m=0,\dots,k-1;$
            \item the $\sigma$-embedding strategy is implemented and the PWDs are computed by a $\tilde{k}$-steps, order $p-1$ convergent MPLM-$\tilde{k}$ scheme as detailed in \eqref{eq:sigma_embedding}.
        \end{itemize}
        Then, the PWDs functions $\bm{\sigma}^{n(p-1)},$ $n=k,\dots,\bar{n},$ are continuously differentiable on $\Omega^{\tilde{k}}$ and satisfy \eqref{eq:cond_sigma}. Therefore, the numerical method \eqref{eq:Compact_Expression} is convergent of order $p.$
    \end{theorem}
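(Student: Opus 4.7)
The plan is to verify the two bulleted hypotheses of Theorem~\ref{thm:Convergence} (continuous differentiability of the PWDs on $\Omega^{\tilde{k}}$ and the accuracy condition \eqref{eq:cond_sigma}) for the recursively defined $\bm{\sigma}^{n(p-1)}$, and then invoke Theorem~\ref{thm:Convergence} to conclude order $p$ convergence. I would carry out a single nested induction on the embedding level $q=0,1,\dots,p-1$, establishing regularity and accuracy simultaneously for each $\bm{\sigma}^{n(q)}$.

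For the differentiability, the base case $q=0$ is immediate, since by \eqref{eq:MPEul} one has $\sigma_i^{n(0)}=y_i^{n-1(p)}$, a linear function of its argument. For the inductive step, assuming $\bm{\sigma}^{n(q-1)}$ is $C^1$ on $\Omega^{\tilde{k}}$, I would apply the first assertion of Lemma~\ref{lem:diff} to conclude that $\diag(\bm{S}^{n(q-1)})$ is $C^1$; combined with the $C^p$ regularity of the production and destruction rates, each matrix $M^{n-r(p)}$ is then continuously differentiable. Invertibility of $I-h\sum_{r=1}^{\tilde{k}}\beta_r^{(q)}M^{n-r(p)}$ follows uniformly on $\Omega^{\tilde{k}}$ from the M-matrix argument used in the proof of Lemma~\ref{lem:positivity}, which is applicable because $\mu>0$ in \eqref{eq:def_Omega} keeps the inner PWDs bounded away from zero. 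The second and third assertions of Lemma~\ref{lem:diff} then give that $\bm{\sigma}^{n(q)}$, as defined by \eqref{eq:sigma_embedding}, is $C^1$ on $\Omega^{\tilde{k}}$.

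For the accuracy condition, the base case $q=0$ is a one-term Taylor expansion: $\sigma_i^{n(0)}(\bm{y}(t_{n-1}),\dots)=y_i(t_{n-1})=y_i(t_n)+\mathcal{O}(h)$. For the inductive step, the key observation is that evaluating the embedding formula \eqref{eq:sigma_embedding} at the exact values $(\bm{y}(t_{n-1}),\dots,\bm{y}(t_{n-\tilde{k}}))$ produces exactly the vector that a single step of the order-$q$ MPLM-$\tilde{k}$ scheme would output when started from the exact history and using $\bm{\sigma}^{n(q-1)}$ itself evaluated at the same exact values. The inductive hypothesis on the inner level yields $\bm{\sigma}^{n(q-1)}(\mathrm{exact})=\bm{y}(t_n)+\mathcal{O}(h^q)$, so Theorem~\ref{thm:Consis_order_p} applies to that inner scheme and its local truncation error is $\mathcal{O}(h^{q+1})$. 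Writing the difference $\bm{\sigma}^{n(q)}(\mathrm{exact})-\bm{y}(t_n)$ as the solution of the linear system obtained by subtracting the exact identity satisfied by $\bm{y}(t_n)$ (up to local truncation error) from the embedded recurrence, and inverting the uniformly bounded $(I-hA)$ factor guaranteed by the M-matrix property, one obtains $\bm{\sigma}^{n(q)}(\mathrm{exact})=\bm{y}(t_n)+\mathcal{O}(h^{q+1})$.

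Taking $q=p-1$ yields the two hypotheses of Theorem~\ref{thm:Convergence}, and the claimed order $p$ convergence follows. The main obstacle I anticipate is the inductive step for accuracy, specifically making precise the identification between the algebraic formula \eqref{eq:sigma_embedding} evaluated at exact solution values and a legitimate single step of the order-$q$ inner MPLM-$\tilde{k}$ scheme, so that Theorem~\ref{thm:Consis_order_p} can be invoked. Keeping track at each nested level of which quantities are exact versus numerical, and confirming the uniform invertibility bounds needed to propagate $\mathcal{O}(h^{q+1})$ through the linear solve, is where the careful bookkeeping lives.
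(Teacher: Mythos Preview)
Your proposal is correct and follows the same inductive skeleton as the paper's proof: establish, level by level, that each $\bm{\sigma}^{n(q)}$ is $C^1$ (via Lemma~\ref{lem:diff}) and satisfies the accuracy condition~\eqref{eq:cond_sigma}, then invoke Theorem~\ref{thm:Convergence}. The treatment of the accuracy step differs slightly. The paper leans on the theorem's hypothesis that the inner MPLM-$\tilde{k}$ scheme is order $p-1$ \emph{convergent}, passes to order $p-1$ consistency via the Chartres--Stepleman result, and then appeals to Theorem~\ref{Them_Con_Neces} to obtain~\eqref{eq:cond_sigma}. Your route is more self-contained: you carry the accuracy of the inner PWDs $\bm{\sigma}^{n(q-1)}$ as part of the inductive hypothesis, feed that into Theorem~\ref{thm:Consis_order_p} to get an $\mathcal{O}(h^{q+1})$ local truncation error for the level-$q$ scheme, and close with the explicit M-matrix bound on the linear solve. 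Your approach avoids the external reference and makes the $(I-hA)^{-1}$ step visible, at the cost of a slightly longer induction; the paper's version is terser but relies on the reader unpacking how consistency of the inner scheme translates into one-step accuracy of its output.
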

    \begin{proof}
        The case $p=1$ corresponds to the first order convergent MPE method \eqref{eq:MPEul}. For $p\geq 2,$ we prove the result by induction. Firstly, we show that the MPLM-$k$ scheme \eqref{eq:Compact_Expression} is quadratically convergent ($p=2$) if the PWDs are computed by the one step Modified Patankar Euler discretization \eqref{eq:MPEul}. As a matter of fact, the consistency condition \eqref{eq:cond_sigma} comes from \cite[Theorem 3.11]{MPEuler}, since
        \begin{equation*}
            \bm{\sigma}^{n(1)}(\bm{y}(t_{n-1}))= \left(I-h\Phi(\bm{y}(t_{n-1}))\right)^{-1}  \bm{y}(t_{n-1})=\bm{y}(t_{n})+\mathcal{O}(h^2), \qquad n\geq 1,
        \end{equation*}
        where $\Phi(\bm{x})=(P(\bm{x})-\diag(D(\bm{x})\bm{e})) \diag\left(\frac{1}{x_1},\dots,\frac{1}{x_N}\right)\in \mathbb{R}^{N\times N}.$ Furthermore, from Lemma \ref{lem:diff}, $\bm{\sigma}^{n(1)}\in C^1(\Omega).$ Therefore, all the hypotheses of Theorem \ref{thm:Convergence} are fulfilled and the second order convergence is established. \\
        Consider $p>2$ and assume the statement to be true for each $s=1,\dots,p-1.$ In this case, the PWDs $\bm{\sigma}^{n(p-1)},$ $n=k,\dots,\bar{n},$ are computed by an order $p-1$ convergent, then consistent \cite[Theorem 3.5]{Chartres}, MPLM-$\tilde{k}$ method, accordingly to \eqref{eq:sigma_embedding}. Therefore, from Theorem \ref{Them_Con_Neces}, the condition \eqref{eq:cond_sigma} directly follows. Furthermore, from the inductive hypotheses, the functions $\bm{\sigma}^{n(p-2)}\in C^1(\Omega^{\tilde{k}}),$ $n=k,\dots,\bar{n}$ and hence, from Lemma \ref{lem:diff}, 
        \begin{align*}
            &\bm{\sigma}^{n(p-1)}(\bm{x})=\left(I-h\sum_{r=1}^{\tilde{k}}\beta_r^{(p-1)}\Phi^{nr(p-2)}(\bm{x})\right)^{-1}  \sum_{r=1}^{\tilde{k}}\alpha_r^{(p-1)}\bm{x}^{r}\in \mathbb{R}^{N}, \\
            &\mbox{with} \;\;\; \Phi^{nr(p-2)}(\bm{x})=(P(\bm{x}^{r})-\diag(D(\bm{x}^{r})\bm{e})) \diag\left(\bm{S}^{n(p-2)}(\bm{x})\right)\in \mathbb{R}^{N\times N}, \\
            &\mbox{and} \;\;\;\; \bm{S}^{n(p-2)}(\bm{x})=\left(\dfrac{1}{\sigma_1^{n(p-2)}(\bm{x})},\dots,\dfrac{1}{\sigma_N^{n(p-2)}(\bm{x})}\right)^\mathsf{T}\in \mathbb{R}^{N}, \quad \bm{x}=(\bm{x}^1,\dots,\bm{x}^{\tilde{k}})^\mathsf{T},
        \end{align*}
        is continuously differentiable on $\Omega^{\tilde{k}}$ as well. Finally, an application of Theorem \ref{thm:Convergence} yields the result.
    \end{proof}
Theorem \ref{thm:sigma_embedding} establishes a practical and general framework to compute the PWDs by embedding different MPLM methods, starting from the modified Patankar Euler discretization. This machinery allows the construction of arbitrarily high order unconditionally positive and conservative numerical methods. However, the $\sigma$-embedding technique in \eqref{eq:sigma_embedding} is specifically designed for positive PDS and cannot be implemented if any of the components of the initial value is zero. This issue has been addressed and overcome in \cite{Kopecz2018} for modified Patankar Runge-Kutta schemes by replacing zero components of the initial value by small quantities like $\textit{realmin}\approx2.26\cdot 10^{-308}$. Our numerical experiments of Section \ref{sec:numerical_Experiments} confirm the effectiveness of this expedient also for the modified Patankar linear multistep methods. 

\begin{table} \center \footnotesize
\setlength{\tabcolsep}{2.0pt}
\caption{Coefficients of the MPLM-$k(p)$ methods.}
\label{tab:1}       
\smallskip
\renewcommand{\arraystretch}{0.75} 
\begin{tabular}{ccccccccccc}
\hline\noalign{\smallskip}\noalign{\smallskip}
\multicolumn{11}{c}{\small MPLM-$2(2)$}  \\
\noalign{\smallskip}
\hline\noalign{\smallskip}
$r$ & $1$ & $2$ \\
$\alpha_r$ & $0$ & $1$ &  &  &  &  &  &  &  &  \\
$\beta_r$ & $2$ & $0$ &  &  &  &  &  &  &  &  \\
\noalign{\smallskip} 

\hline\noalign{\smallskip} \noalign{\smallskip}
\multicolumn{11}{c}{\small MPLM-$4(3)$}  \\
\noalign{\smallskip}
\hline\noalign{\smallskip}
$r$ & $1$ & $2$ & $3$ & $4$ \\
$\alpha_r$ & $1/4$ & $0$ & $3/4$ & $0$ &  & &  &  &  &  \\
$\beta_r$ & $35/18$ & $1/3$ & $0$ & $2/9$ &  & &  &  &  &  \\
\noalign{\smallskip}

\hline\noalign{\smallskip} \noalign{\smallskip}
\multicolumn{11}{c}{\small MPLM-$5(4)$}  \\
\noalign{\smallskip}
\hline\noalign{\smallskip}
$r$ & $1$ & $2$ & $3$ & $4$ & $5$  \\
$\alpha_r$ & $0$ & $0$ & $0$ & $0$ & $1$ & &  &  &  &  \\
$\beta_r$ & $75/32$ & $0$ & $25/48$ & $25/12$ &  $5/96$ & &  &  &  &  \\
\noalign{\smallskip}

\hline\noalign{\smallskip} \noalign{\smallskip}
\multicolumn{11}{c}{\small MPLM-$7(5)$}  \\
\noalign{\smallskip}
\hline\noalign{\smallskip}
$r$ & $1$ & $2$ & $3$ & $4$ & $5$ & $6$ & $7$  \\
$\alpha_r$ & $0$ & $0$ & $0$ & $0$ & $0$ & $0$ & $1$ &  &  &  \\
$\beta_r$ & $12/5$ & $0$ & $197/720$ & $701/360$ &  $43/30$ & $107/360$ & $467/720$ &  &  &  \\
\noalign{\smallskip}

\hline\noalign{\smallskip} \noalign{\smallskip}
\multicolumn{11}{c}{\small MPLM-$10(6)$}  \\
\noalign{\smallskip}
\hline\noalign{\smallskip}
$r$ & $1$ & $2$ & $3$ & $4$ & $5$ & $6$ & $7$ & $8$ & $9$ & $10$  \\
$\alpha_r$ & $0$ & $0$ & $0$ & $0$ & $0$ & $0$ & $0$ & $0$ & $0$ & $1$  \\
$\beta_r$ & $11125/4536$ & $0$ & $0$ & $50/27$ &  $85/36$ & $0$ & $0$ & $125/63$ & $25/24$ &  $25/81$  \\
\noalign{\smallskip}\hline

\end{tabular}
\end{table}

\section{Numerical Experiments}\label{sec:numerical_Experiments}
In this section we report some experiments and compare the performances of the method \eqref{eq:Compact_Expression} with that of some well-established modified Patankar schemes. Our investigation is conducted employing the modified Patankar linear multistep methods of order from $2$ to $6$, whose coefficients are reported in Table \ref{tab:1}. Here, we adopt the notation MPLM-$k(p)$ to indicate a $k$-steps, order $p$  scheme. The numerical simulations of some test problems performed by the methods \eqref{eq:Compact_Expression}-Table \ref{tab:1} with the $\sigma$-embedding technique provide experimental evidence of convergence up to the sixth order. A self-starting embedding procedure is here implemented for computing accurate starting values satisfying \ref{hp:3} and \ref{hp':2}.
To assess the order of the MPLM-$k(p)$ schemes, we consider the maximum absolute error $E(h)$ and the experimental rate of convergence $\hat{p}$ defined as follows
\begin{equation}\label{eq:E(h)}
    E(h)=\max_{0\leq n \leq T/h} \left\|\bm{y}^{n(ref)}-\bm{y}^n\right\|_\infty, \qquad \quad \hat{p}=\log_2\left(\dfrac{E(h)}{E\left(\frac{1}{2}h\right)}\right).
\end{equation}
For each test problem, the reference solution $\bm{y}^{n(ref)}$ in \eqref{eq:E(h)} is obtained by the \texttt{ode15s} built-in Matlab function with absolute and relative tolerances of $\texttt{AbsTol}=\varepsilon_{mach}\approx2.22\cdot 10^{-16}$ and $\texttt{RelTol}=\varepsilon_{mach}\cdot 10^2 $, respectively. 

A direct comparison with a third order Modified Patankar Runge-Kutta (MPRK$3$) method (see \cite[Lemma $6,$ Case II]{Kopecz2018} with $\gamma=0.5$) and with high order Modified Patankar Deferred Correction (MPDeC) schemes (see \cite{Torlo2020}), highlights the competitive performances of the MPLM-$k$ discretizations.  Incidentally, in the case of the MPDeC integrators, the computational efforts required to obtain the coefficients are disregarded.

All the numerical experiments are conducted on a single machine equipped with an Intel Core i7-7700HQ Octa-Core processor operating at 2.80GHz and supported by 8.00 GB of RAM. Both MPRK$3$ and MPLM-$k$ algorithms are implemented and executed using MATLAB (version R2020b). Additionally, the MPLM-$k$ schemes are implemented and executed with Julia (version 1.8.5) and compared against the MPDeC codes available in the repository \cite{Torlo_Rep}.

\subsection{Test 1: linear test}\label{Test1}
Our first example consists in the following linear test
		\begin{equation}\label{eq:Linear_Test}
			\begin{aligned}
                y^{\prime}_1(t)=-a y_1(t)+y_2(t), \\
                y^{\prime}_2(t)=\phantom{-}a y_1(t)-y_2(t),
            \end{aligned}  \qquad \mbox{with} \quad a=5, \quad t\in[0,2] \quad \mbox{and} \quad  \bm{y}^0=\begin{pmatrix}0.9 \\ 0.1 \end{pmatrix},
		\end{equation}
proposed in \cite{Kopecz2018,Kopecz2018second,Huang2019,Huang2019II}. The system \eqref{eq:Linear_Test} describes the exchange of mass between two constituents and fits the form of a fully conservative PDS, where the production and destruction terms in \eqref{eq:PDS_terms} are given by
		\begin{equation*}
			p_{12}(\bm{y})=y_2, \qquad p_{21}(\bm{y})=ay_1, \qquad d_{12}(\bm{y})=ay_1, \qquad d_{21}(\bm{y})=y_2.
		\end{equation*}
Since $p_{ij}(\bm{y})$ is continuously differentiable with bounded derivatives and $\lim_{\bm{y}\to \bm{0}}p_{ij}(\bm{y})=0,$ for $i,j\in \{1,2\},$ the existence of a unique and positive solution to \eqref{eq:Linear_Test} is guaranteed by \cite[Theorem 1.2]{TORLO2022}.
\begin{figure}
	\begin{center}
		\scalebox{0.85}{\includegraphics{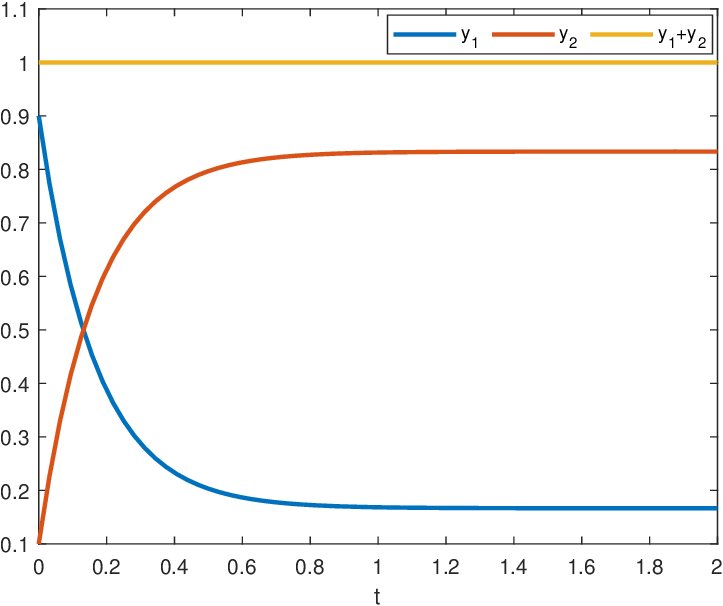}}
	\end{center}
	\caption{Numerical solution of \hyperref[Test1]{Test 1} computed by MPLM-$10(6)$ with $h=2^{-5}$ .} 
\label{fig:Linear_Test_PL}
\end{figure}

The approximation of the continuous-time solution to \eqref{eq:Linear_Test}, computed by the MPLM-$10(6)$ method with $h=2^{-5},$ is reported in Figure \ref{fig:Linear_Test_PL}.
In compliance with Lemmas \ref{lem:positivity} and \ref{lem:conservativity}, the numerical solution is positive and the linear invariant of the PDS \eqref{Test1} is retained. 
In Table \ref{tab:Linear_Test_Err} we list the maximum errors $E(h)$ on the integration interval $[0,2]$ and the experimental rate of convergence $\hat{p}$ for all the methods listed in Table \ref{tab:1}. From Table \ref{tab:Linear_Test_Err}, as well as from Figures \ref{fig:Linear_Test_1} and \ref{fig:Linear_Test_2}, it is clear that the experimental order agrees with the theoretical one established in Theorem \ref{thm:sigma_embedding}. Furthermore, the work precision diagram of Figure \ref{fig:linear_WPD} shows the mean execution time over $10$ runs against the approximation error for the MPLM-$k(p)$ methods with $2\leq p\leq 6.$ It is evident that, for $p\geq 4,$ the methods in Table \ref{tab:1} are computationally more efficient than the benchmark scheme MPRK3, in terms of the accuracy-computational cost trade off. 

\begin{figure}
	\begin{center}
		\scalebox{1}{\includegraphics{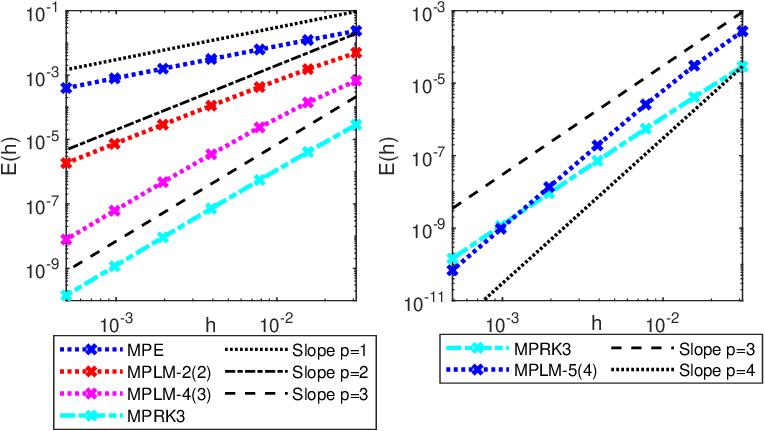}}
	\end{center}
	\caption{Experimental order for MPE, MPLM-$2(2),$ MPLM-$4(3),$ MPLM-$5(4)$ and MPRK3 applied to \hyperref[Test1]{Test 1}.} 
\label{fig:Linear_Test_1}
\end{figure}
\begin{figure}
	\begin{center}
		\scalebox{1}{\includegraphics{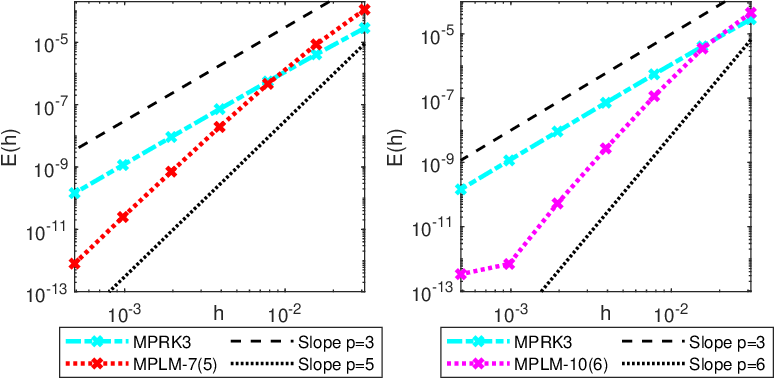}}
	\end{center}
	\caption{Experimental order for MPLM-$7(5),$ MPLM-$10(6)$ and MPRK3 applied to \hyperref[Test1]{Test 1}.} 
\label{fig:Linear_Test_2}
\end{figure}
\begin{figure}
\begin{center}
	\scalebox{1}{\includegraphics{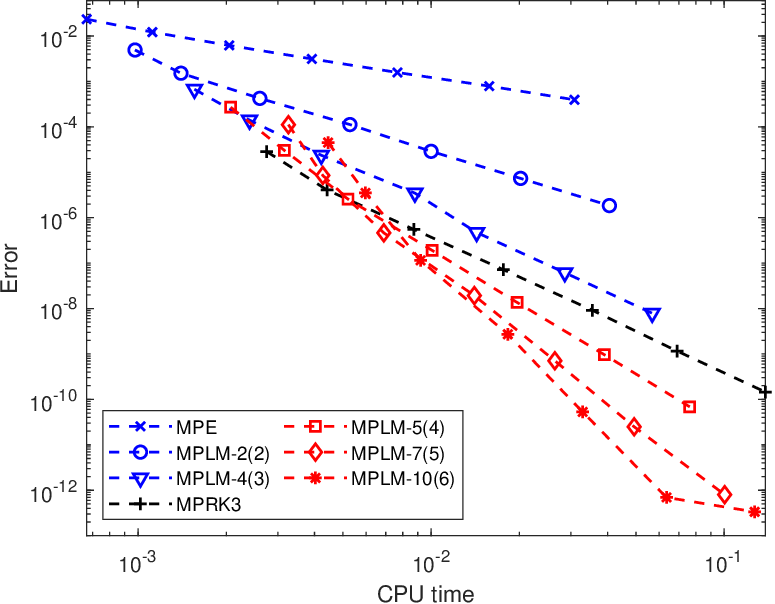}}
\end{center}
\caption{Work Precision Diagram: Error versus CPU time for the different methods applied to \hyperref[Test1]{Test 1}. $h=T/2^{5+m}$, $m=1,\ldots,7.$}
\label{fig:linear_WPD}
\end{figure}

\begin{table} \center \footnotesize
\caption{Experimental convergence of the numerical solutions to \hyperref[Test1]{Test 1}.}
\label{tab:Linear_Test_Err}       
\smallskip
\begin{tabular}{ccc|ccc|ccc}

\hline\noalign{\smallskip}
\multicolumn{3}{c|}{MPE} & \multicolumn{3}{|c}{MPLM-$2(2)$} & \multicolumn{3}{|c}{MPLM-$4(3)$}  \\
\noalign{\smallskip}\hline\noalign{\smallskip}
$h$ & $E(h)$ & $\hat{p}$ & $h$ & $E(h)$ & $\hat{p}$ & $h$ & $E(h)$ & $\hat{p}$ \\
$2^{-5}$ & $2.34 \cdot 10^{-2}$ & --- & $2^{-5}$ & $4.92 \cdot 10^{-3}$ & --- & $2^{-5}$ & $6.71 \cdot 10^{-4}$ & --- \\
$2^{-6}$ & $1.21 \cdot 10^{-2}$ & $0.94$ & $2^{-6}$ & $1.52 \cdot 10^{-3}$ & $1.70$ & $2^{-6}$ & $1.41 \cdot 10^{-4}$ & $2.25$ \\
$2^{-7}$ & $6.20 \cdot 10^{-3}$ & $0.97$ & $2^{-7}$ & $4.24 \cdot 10^{-4}$ & $1.84$ & $2^{-7}$ & $2.37 \cdot 10^{-5}$ & $2.57$ \\
$2^{-8}$ & $3.13 \cdot 10^{-3}$ & $0.98$ & $2^{-8}$ & $1.12 \cdot 10^{-4}$ & $1.92$ & $2^{-8}$ & $3.48 \cdot 10^{-6}$ & $2.77$ \\
$2^{-9}$ & $1.57 \cdot 10^{-3}$ & $0.99$ & $2^{-9}$ & $2.89 \cdot 10^{-5}$ & $1.96$ & $2^{-9}$ & $4.72 \cdot 10^{-7}$ & $2.88$ \\
$2^{-10}$ & $7.88 \cdot 10^{-4}$ & $1.00$ & $2^{-10}$ & $7.34 \cdot 10^{-6}$ & $1.98$ & $2^{-10}$ & $6.16 \cdot 10^{-8}$ & $2.94$ \\
$2^{-11}$ & $3.95 \cdot 10^{-4}$ & $1.00$ & $2^{-11}$ & $1.85 \cdot 10^{-6}$ & $1.99$ & $2^{-11}$ & $7.87 \cdot 10^{-9}$ & $2.97$ \\
\noalign{\smallskip} \hline

\noalign{\smallskip}
\multicolumn{3}{c}{MPLM-$5(4)$} & \multicolumn{3}{|c}{MPLM-$7(5)$} & \multicolumn{3}{|c}{MPLM-$10(6)$}  \\
\noalign{\smallskip}\hline\noalign{\smallskip}
$h$ & $E(h)$ & $\hat{p}$ & $h$ & $E(h)$ & $\hat{p}$ & $h$ & $E(h)$ & $\hat{p}$ \\
$2^{-5}$ & $2.70 \cdot 10^{-4}$ & --- & $2^{-5}$ & $1.12 \cdot 10^{-4}$ & --- & $2^{-5}$ & $4.52 \cdot 10^{-5}$ & --- \\
$2^{-6}$ & $3.02 \cdot 10^{-5}$ & $3.16$ & $2^{-6}$ & $8.53 \cdot 10^{-6}$ & $3.71$ & $2^{-6}$ & $3.51 \cdot 10^{-6}$ & $3.69$ \\
$2^{-7}$ & $2.57 \cdot 10^{-6}$ & $3.56$ & $2^{-7}$ & $4.64 \cdot 10^{-7}$ & $4.20$ & $2^{-7}$ & $1.15 \cdot 10^{-7}$ & $4.93$ \\
$2^{-8}$ & $1.91 \cdot 10^{-7}$ & $3.75$ & $2^{-8}$ & $1.93 \cdot 10^{-8}$ & $4.59$ & $2^{-8}$ & $2.71 \cdot 10^{-9}$ & $5.40$ \\
$2^{-9}$ & $1.36 \cdot 10^{-8}$ & $3.81$ & $2^{-9}$ & $7.09 \cdot 10^{-10}$ & $4.77$ & $2^{-9}$ & $5.30 \cdot 10^{-11}$ & $5.68$ \\
$2^{-10}$ & $9.63 \cdot 10^{-10}$ & $3.82$ & $2^{-10}$ & $2.49 \cdot 10^{-11}$ & $4.83$ & $2^{-10}$ & $6.95 \cdot 10^{-13}$ & $6.26$ \\
$2^{-11}$ & $6.88 \cdot 10^{-11}$ & $3.81$ & $2^{-11}$ & $7.98 \cdot 10^{-13}$ & $4.97$ & $2^{-11}$ & $3.34 \cdot 10^{-13}$ & $1.06$ \\
\noalign{\smallskip} \hline

\end{tabular}
\end{table}

The MPLM methods in Table \ref{tab:1} are then also compared to the modified Patankar deferred correction integrators outlined in \cite{Torlo2020}, utilizing the Julia implementation of the MPDeC schemes provided in \cite{Torlo_Rep}. For the sake of comparison, we consider the mean relative error taken over all time steps and all constituents,
\begin{equation}\label{eq:Relative_Error_all_components}
    \varepsilon(h) = \frac{1}{N} \sum_{i=1}^{N} \left( \dfrac{\sqrt{\bar{n} \sum_{n=1}^{\bar{n}}(y_i^{n(J)}-y_i^n)^2}}{\bar{n}\sum_{n=1}^{\bar{n}}y_i^{n(J)}} \right), \qquad\qquad \bar{n}=T/h,
\end{equation}
as defined in \cite{Kopecz2018}. Here, the reference solution $\bm{y}^{n(J)}$ in \eqref{eq:E(h)} is obtained by the \texttt{radau} built-in Julia solver (implicit Runge-Kutta of variable order between 5 and 13) with absolute and relative tolerances of $10^{-14}.$ It turns out that for the \hyperref[Test1]{Test 1}, the MPDeC methods exhibit superior performance with respect to the MPLM schemes, as shown in Figure \ref{figWPD_MPLM_DeC_Lin_Julia}.
\begin{figure}
\begin{center}
	\scalebox{1}{\includegraphics{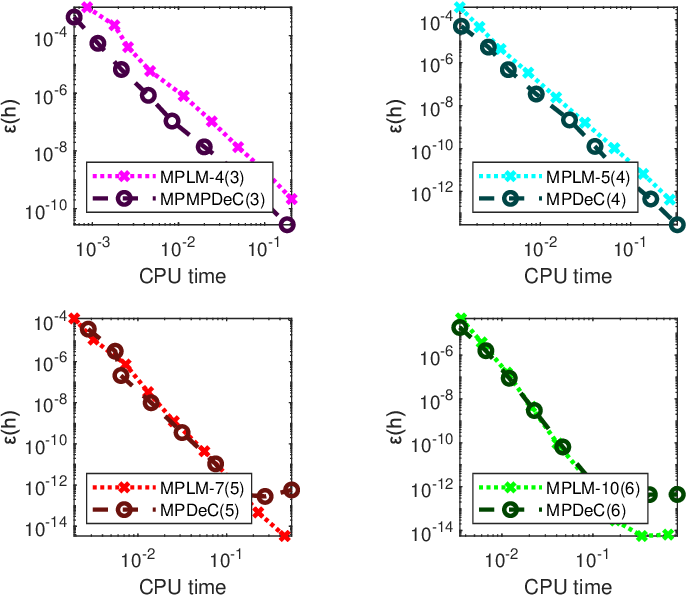}}
\end{center}
\caption{Work Precision Diagram: Mean Error versus CPU time for the different methods applied to \hyperref[Test1]{Test 1}.  $h=T/2^{5+m}$, $m=1,\ldots,7.$}
\label{figWPD_MPLM_DeC_Lin_Julia}
\end{figure}



\subsection{Test 2: nonlinear test}\label{Test2}
For our second test problem, we consider the non-stiff nonlinear system \cite{MPEuler,Kopecz2018,Kopecz2018second}
\begin{equation}\label{eq:Non_Linear_Test}
			\begin{aligned}
				&y^{\prime}_1(t)=-\dfrac{y_1(t)y_2(t)}{y_1(t)+1}, \\
				&y^{\prime}_2(t)=\phantom{-}\dfrac{y_1(t)y_2(t)}{y_1(t)+1}-ay_2(t), \\
				&y^{\prime}_3(t)=\phantom{-}ay_2(t),
			\end{aligned}\qquad \mbox{with} 
            \quad
            \begin{array}{rr}
			     a=0.3,  \\
			     t\in [0,30], 
			\end{array} 
            \;\ \mbox{and} \;\  
            \bm{y}^0=\begin{pmatrix}9.98 \\ 0.01 \\  0.01 \end{pmatrix},
\end{equation}
obtained from \eqref{eq:PDS} by taking $p_{ij}(\bm{y})=0=d_{ji}(\bm{y})$ for all combination of $i,j=1,\dots,3$ other than
\begin{equation*}
    p_{21}(\bm{y}) =d_{12}(\bm{y})=\dfrac{y_1y_2}{y_1+1}, \qquad\qquad p_{32}(\bm{y}) =d_{23}(\bm{y}) =ay_2.
\end{equation*}
The existence and the uniqueness of a positive solution to \eqref{eq:Non_Linear_Test} comes from \cite[Theorem 1.2]{TORLO2022}. As a matter of fact, given  $0<\bm{y}=(y_1,y_2,y_3)^\mathsf{T}\leq \bm{e}^\mathsf{T}\bm{y}^0,$ 
\begin{equation*}
    \frac{\partial p_{ij}}{\partial y_l}(\bm{y})\leq \max \{a,\bm{e}^\mathsf{T}\bm{y}^0\}, \qquad \mbox{and} \qquad p_{ij}(\bm{y})\overset{\bm{y}\to \bm{0}}{\longrightarrow}0,
    \qquad  i,j,l\in \{1,2,3\}.
\end{equation*}

The PDS \eqref{eq:Non_Linear_Test} models an algal bloom and might be interpreted as a geobiochemical model for the upper oceanic layer in spring, when nutrient rich surface water is captured in the euphotic zone. During the process, nutrients at time $t,$ $y_1(t),$ are taken up by the phytoplankton $y_2(t)$ according to a Michaelis–Menten formulation. Concurrently, the phytoplankton biomass is converted to detritus $y_3(t),$ with a loss fixed rate $a>0$, due the effects of mortality and zooplankton grazing. Thus the system is considered closed and the total biomass remains constant accordingly to the conservation law \eqref{eq:int_primo}. 
\begin{figure}
	\begin{center}
		\scalebox{0.85}{\includegraphics{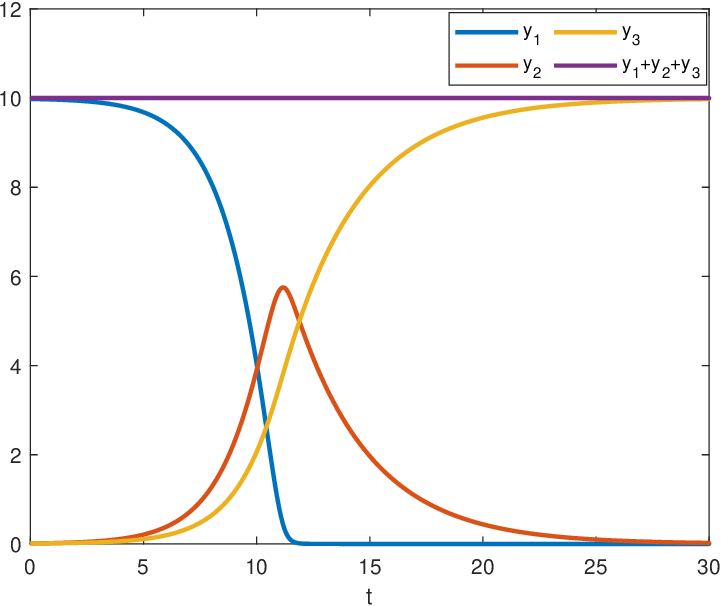}}
	\end{center}
	\caption{Numerical solution of \hyperref[Test2]{Test 2} by MPLM-$10(6)$ with $h=1.88\cdot 2^{-5}.$} 
\label{fig:Non_Linear_Test_PL}
\end{figure}

The outcomes of the numerical integration of \eqref{eq:Non_Linear_Test} by MPLM-$10(6)$ with $h=1.88\cdot 2^{-5}$ are shown in Figure \ref{fig:Non_Linear_Test_PL}.
The experimental results of Table \ref{tab:Non_Linear_Test_Err} comply with the theoretical findings for this test as well and, from Figure \ref{fig:Non_linear_WPD}, it is clear that for $p\geq 3$ the MPLM-$k(p)$ methods outperform the MPRK3 discretization. For the sake of brevity, here we omit the plots comparing our methods with the MPDeC schemes, which exhibit superior performances also in this test.

%
\begin{figure}
	\begin{center}
		\scalebox{1}{\includegraphics{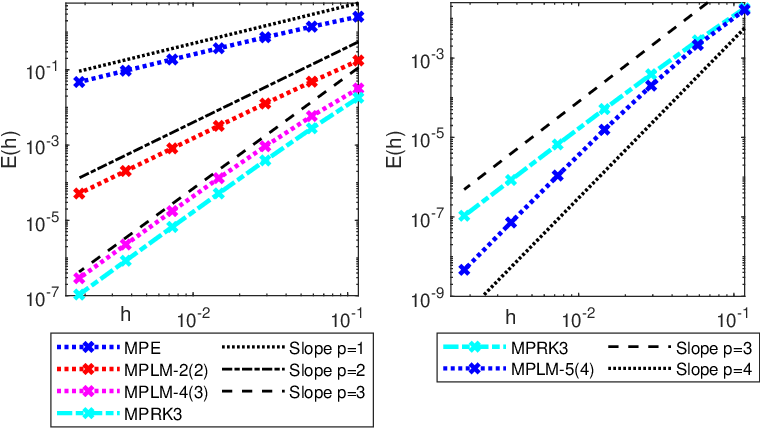}}
	\end{center}
	\caption{Experimental order for MPE, MPLM-$2(2),$ MPLM-$4(3),$ MPLM-$5(4)$ and MPRK3 applied to \hyperref[Test2]{Test 2}.} 
\label{fig:Non_Linear_Test_1}
\end{figure}

\begin{figure}
	\begin{center}
		\scalebox{1}{\includegraphics{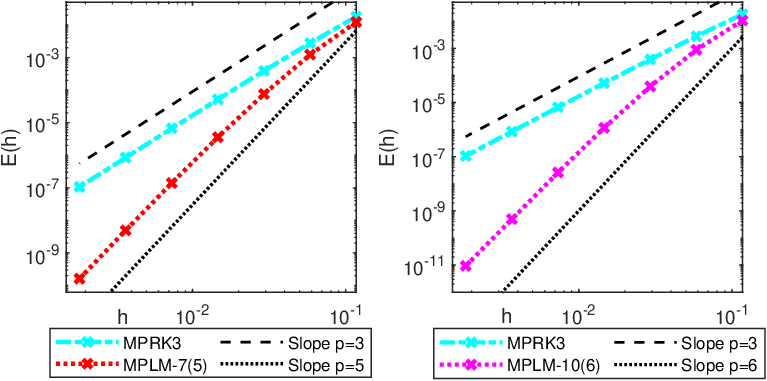}}
	\end{center}
	\caption{Experimental order for MPLM-$7(5),$ MPLM-$10(6)$ and MPRK3 applied to \hyperref[Test2]{Test 2}.} 
\label{fig:Non_Linear_Test_2}
\end{figure}
\begin{figure}
\begin{center}
	\scalebox{1}{\includegraphics{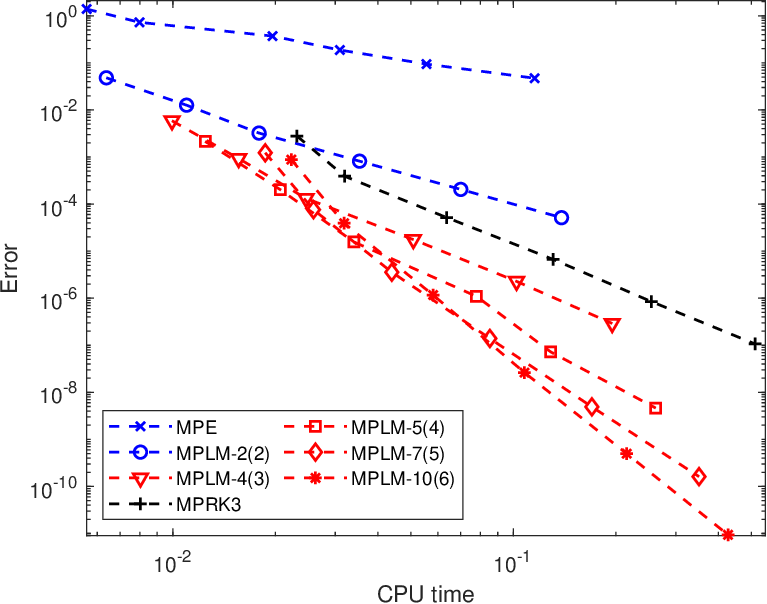}}
\end{center}
\caption{Work Precision Diagram: Error versus CPU time for the different methods applied to \hyperref[Test2]{Test 2}. $h=T/2^{8+m}$, $m=1,\ldots,6.$}
\label{fig:Non_linear_WPD}
\end{figure}
\begin{table} \center \footnotesize
\caption{Experimental convergence of the numerical solutions to \hyperref[Test2]{Test 2}.}
\label{tab:Non_Linear_Test_Err}       
\smallskip
\begin{tabular}{ccc|ccc|ccc} 
\hline\noalign{\smallskip}
\multicolumn{3}{c|}{MPE} & \multicolumn{3}{|c}{MPLM-$2(2)$} & \multicolumn{3}{|c}{MPLM-$4(3)$}  \\
\noalign{\smallskip}\hline\noalign{\smallskip}
$1.07\cdot h$ & $E(h)$ & $\hat{p}$ & $1.07\cdot h$ & $E(h)$ & $\hat{p}$ & $1.07\cdot h$ & $E(h)$ & $\hat{p}$ \\
$2^{-3}$ & $2.57 \cdot 10^{0}$ & --- & $2^{-3}$ & $1.76 \cdot 10^{-1}$ & --- & $2^{-3}$ & $3.17 \cdot 10^{-2}$ & --- \\
$2^{-4}$ & $1.40 \cdot 10^{0}$ & $0.88$ & $2^{-4}$ & $4.83 \cdot 10^{-2}$ & $1.87$ & $2^{-4}$ & $5.88 \cdot 10^{-3}$ & $2.43$ \\
$2^{-5}$ & $7.28 \cdot 10^{-1}$ & $0.94$ & $2^{-5}$ & $1.26 \cdot 10^{-2}$ & $1.93$ & $2^{-5}$ & $9.29 \cdot 10^{-4}$ & $2.66$ \\
$2^{-6}$ & $3.71 \cdot 10^{-1}$ & $0.97$ & $2^{-6}$ & $3.23 \cdot 10^{-3}$ & $1.97$ & $2^{-6}$ & $1.32 \cdot 10^{-4}$ & $2.82$ \\
$2^{-7}$ & $1.88 \cdot 10^{-1}$ & $0.99$ & $2^{-7}$ & $8.16 \cdot 10^{-4}$ & $1.98$ & $2^{-7}$ & $1.76 \cdot 10^{-5}$ & $2.90$ \\
$2^{-8}$ & $9.43 \cdot 10^{-2}$ & $0.99$ & $2^{-8}$ & $2.05 \cdot 10^{-4}$ & $1.99$ & $2^{-8}$ & $2.28 \cdot 10^{-6}$ & $2.95$ \\
$2^{-9}$ & $4.73 \cdot 10^{-2}$ & $1.00$ & $2^{-9}$ & $5.14 \cdot 10^{-5}$ & $2.00$ & $2^{-9}$ & $2.90 \cdot 10^{-7}$ & $2.97$ \\
\noalign{\smallskip} \hline

\noalign{\smallskip}
\multicolumn{3}{c}{MPLM-$5(4)$} & \multicolumn{3}{|c}{MPLM-$7(5)$} & \multicolumn{3}{|c}{MPLM-$10(6)$} \\
\noalign{\smallskip}\hline\noalign{\smallskip}
$1.07\cdot h$ & $E(h)$ & $\hat{p}$ & $1.07\cdot h$ & $E(h)$ & $\hat{p}$ & $1.07\cdot h$ & $E(h)$ & $\hat{p}$ \\
$2^{-3}$ & $1.64 \cdot 10^{-2}$ & --- & $2^{-3}$ & $1.24 \cdot 10^{-2}$ & --- & $2^{-3}$ & $1.06 \cdot 10^{-2}$ & --- \\
$2^{-4}$ & $2.14 \cdot 10^{-3}$ & $2.94$ & $2^{-4}$ & $1.23 \cdot 10^{-3}$ & $3.34$ & $2^{-4}$ & $8.81 \cdot 10^{-4}$ & $3.59$ \\
$2^{-5}$ & $2.02 \cdot 10^{-4}$ & $3.40$ & $2^{-5}$ & $7.60 \cdot 10^{-5}$ & $4.02$ & $2^{-5}$ & $3.92 \cdot 10^{-5}$ & $4.49$ \\
$2^{-6}$ & $1.57 \cdot 10^{-5}$ & $3.69$ & $2^{-6}$ & $3.57 \cdot 10^{-6}$ & $4.41$ & $2^{-6}$ & $1.17 \cdot 10^{-6}$ & $5.07$ \\
$2^{-7}$ & $1.10 \cdot 10^{-6}$ & $3.84$ & $2^{-7}$ & $1.39 \cdot 10^{-7}$ & $4.68$ & $2^{-7}$ & $2.63 \cdot 10^{-8}$ & $5.48$ \\
$2^{-8}$ & $7.23 \cdot 10^{-8}$ & $3.92$ & $2^{-8}$ & $4.91 \cdot 10^{-9}$ & $4.83$ & $2^{-8}$ & $4.97 \cdot 10^{-10}$ & $5.72$ \\
$2^{-9}$ & $4.64 \cdot 10^{-9}$ & $3.96$ & $2^{-9}$ & $1.62 \cdot 10^{-10}$ & $4.92$ & $2^{-9}$ & $9.40 \cdot 10^{-12}$ & $5.73$ \\
\noalign{\smallskip} \hline

\end{tabular}
\end{table}


\subsection{Test 3: Brusselator test}\label{Test3}
 Our next experiment addresses a typical nonlinear chemical kinetics problem modeled by the original Brusselator system\cite{Bruss,Hairer,Bonaventura2017}  
 \begin{equation}\label{eq:Bruss}
			\begin{aligned}
				& y^{\prime}_1(t)=-k_1 y_1(t), \\
                & y^{\prime}_2(t)=-k_2 y_2(t)y_5(t), \\ 
                & y^{\prime}_3(t)=k_2 y_2(t)y_5(t), \\
				& y^{\prime}_4(t)=k_4y_5(t), \\ 
                & y^{\prime}_5(t)=k_1y_1(t)-k_2y_2(t)y_5(t)+k_3y^2_5(t)y_6(t)-k_4y_5(t), \\
				& y^{\prime}_6(t)=k_2y_2(t)y_5(t)-k_3y^2_5(t)y_6(t), 
			\end{aligned}\qquad \mbox{with} 
            \quad
            \bm{y}^0=\begin{pmatrix}10 \\ 10 \\  0 \\ 0 \\ 0.1 \\ 0.1 \end{pmatrix}.
 \end{equation}
 The differential system \eqref{eq:Bruss} falls in the form \eqref{eq:PDS}, setting
 \begin{equation*}
        \begin{split}
            &p_{32}(\bm{y}) = d_{23}(\bm{y}) = k_2 y_2 y_5, \qquad p_{45}(\bm{y}) = d_{54}(\bm{y}) = k_4 y_5, \qquad p_{51}(\bm{y}) = d_{15}(\bm{y}) = k_1 y_1, \\
            & p_{56}(\bm{y}) = d_{65}(\bm{y}) = k_3 y^2_5 y_6, \qquad p_{65}(\bm{y}) = d_{56}(\bm{y}) = k_2 y_2 y_5,
        \end{split}
 \end{equation*}
 and $p_{ij}(\bm{y}) = d_{ji}(\bm{y}) = 0$ for all other combinations of $i$ and $j$ in $[1,6]\cap \mathbb{N}$. Resorting to \cite[Theorem 1.2]{TORLO2022} the existence of a unique non-negative solution to \eqref{eq:Bruss} can be proved.

  \begin{figure}
	\begin{center}
		\scalebox{0.85}{\includegraphics{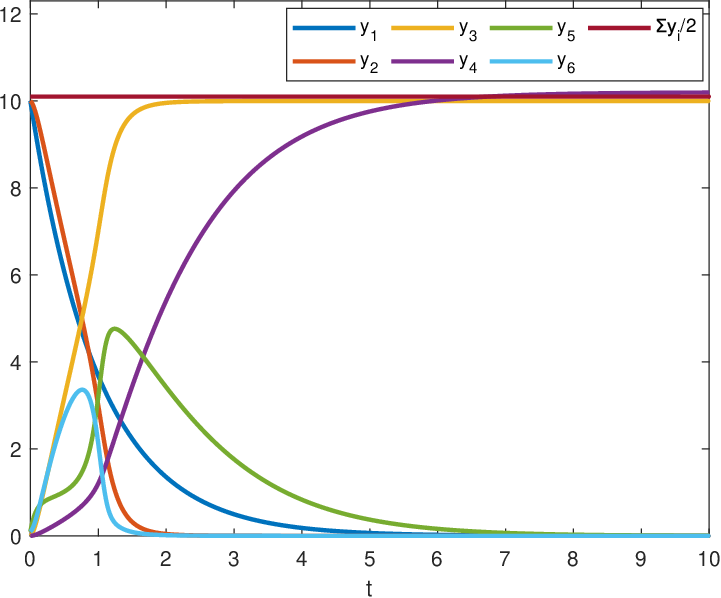}}
	\end{center}
	\caption{Numerical solution of \hyperref[Test3]{Test 3} by MPLM-$10(6)$ with $h=1.25\cdot 2^{-5}.$} 
\label{fig:Bruss_Test_PL}
\end{figure}
For this test problem the $\sigma$-embedding technique \eqref{eq:sigma_embedding} is not suitable for use due the presence of zero components in the initial value $\bm{y}^0.$ As already pointed out, we set $y^0_3=y^0_4=\mbox{\texttt{realmin}}\approx2.26\cdot 10^{-308}.$
The simulation of the system \eqref{eq:Bruss} by the MPLM-$10(6)$ method with $h=1.25\cdot 2^{-5}$, assuming $t\in [0,10]$ and $k_l=1,$ $l=1,\dots,4,$ results in the numerical solution of Figure \ref{fig:Bruss_Test_PL}. Accordingly to the theoretical findings of Lemmas \ref{lem:positivity} and \ref{lem:conservativity}, both the positivity and the conservativity are guaranteed. Furthermore, the results of Table \ref{tab:Bruss_Test_Err} and Figures \ref{fig:Bruss_Test_1} and \ref{fig:Bruss_Test_2} underline the effectiveness of the choice to slightly variate the initial state. The work precision diagram of Figure \ref{fig:Bruss_Test_WPD} confirms that, assuming the same mean execution time, the MPLM-$k(p)$ methods for $p\geq 3$ attain higher accuracy than the MPRK3 scheme.
\begin{figure}
	\begin{center}
		\scalebox{0.95}{\includegraphics{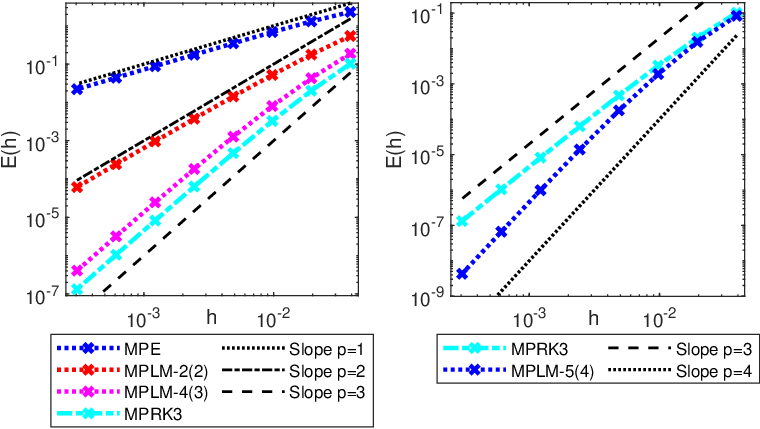}}
	\end{center}\vspace{-0.3cm}
	\caption{Experimental order for MPE, MPLM-$2(2),$ MPLM-$4(3),$ MPLM-$5(4)$ and MPRK3 applied to \hyperref[Test3]{Test 3}.} 
\label{fig:Bruss_Test_1}
\end{figure}
\begin{figure}
	\begin{center}
		\scalebox{0.95}{\includegraphics{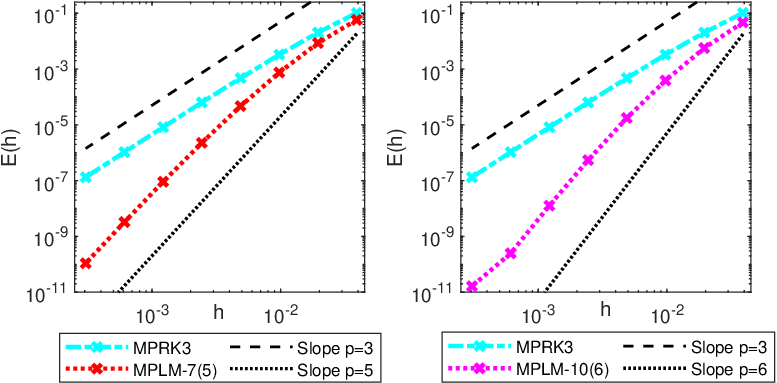}}
	\end{center}\vspace{-0.3cm}
	\caption{Experimental order for MPLM-$7(5),$ MPLM-$10(6)$ and MPRK3 applied to \hyperref[Test3]{Test 3}.} 
\label{fig:Bruss_Test_2}
\end{figure}
\begin{figure}
\begin{center}
	\scalebox{1}{\includegraphics{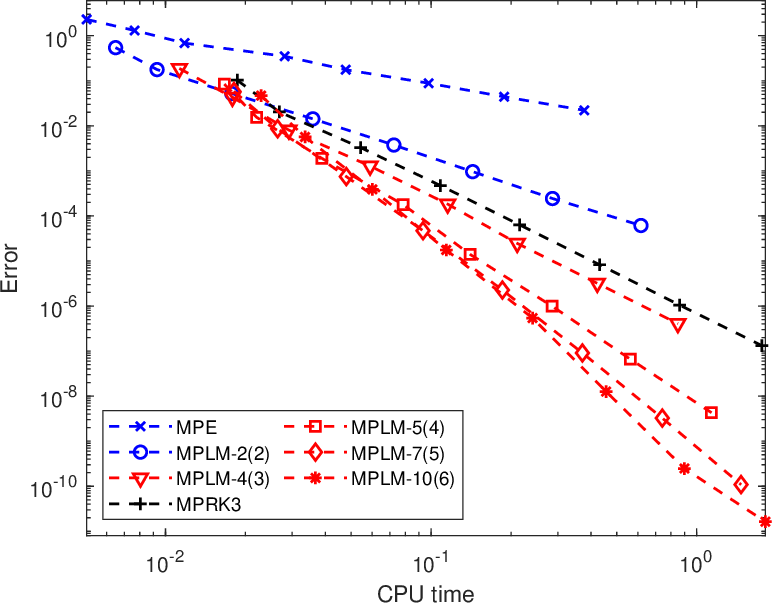}}
\end{center}
\caption{Work Precision Diagram: Error versus CPU time for the different methods applied to \hyperref[Test3]{Test 3}. $h=T/2^{8+m}$, $m=0,\ldots,7.$}
\label{fig:Bruss_Test_WPD}
\end{figure}
\begin{table} \center \footnotesize
\caption{Experimental convergence of the numerical solutions to \hyperref[Test3]{Test 3}.}
\label{tab:Bruss_Test_Err}       
\smallskip
\begin{tabular}{ccc|ccc|ccc}

\hline\noalign{\smallskip}
\multicolumn{3}{c|}{MPE} & \multicolumn{3}{|c}{MPLM-$2(2)$} & \multicolumn{3}{|c}{MPLM-$4(3)$}  \\
\noalign{\smallskip}\hline\noalign{\smallskip}
$0.80\cdot h$ & $E(h)$ & $\hat{p}$ & $0.80\cdot h$ & $E(h)$ & $\hat{p}$ & $0.80\cdot h$ & $E(h)$ & $\hat{p}$ \\
$2^{-5}$ & $2.30 \cdot 10^{0}$ & --- & $2^{-5}$ & $5.44 \cdot 10^{-1}$ & --- & $2^{-5}$ & $1.87 \cdot 10^{-1}$ & --- \\
$2^{-6}$ & $1.31 \cdot 10^{0}$ & $0.82$ & $2^{-6}$ & $1.77 \cdot 10^{-1}$ & $1.62$ & $2^{-6}$ & $4.29 \cdot 10^{-2}$ & $2.13$ \\
$2^{-7}$ & $6.86 \cdot 10^{-1}$ & $0.93$ & $2^{-7}$ & $5.21 \cdot 10^{-2}$ & $1.77$ & $2^{-7}$ & $8.04 \cdot 10^{-3}$ & $2.42$ \\
$2^{-8}$ & $3.49 \cdot 10^{-1}$ & $0.97$ & $2^{-8}$ & $1.43 \cdot 10^{-2}$ & $1.87$ & $2^{-8}$ & $1.28 \cdot 10^{-3}$ & $2.65$ \\
$2^{-9}$ & $1.76 \cdot 10^{-1}$ & $0.99$ & $2^{-9}$ & $3.75 \cdot 10^{-3}$ & $1.93$ & $2^{-9}$ & $1.83 \cdot 10^{-4}$ & $2.81$ \\
$2^{-10}$ & $8.82 \cdot 10^{-2}$ & $1.00$ & $2^{-10}$ & $9.62 \cdot 10^{-4}$ & $1.96$ & $2^{-10}$ & $2.46 \cdot 10^{-5}$ & $2.90$ \\
$2^{-11}$ & $4.42 \cdot 10^{-2}$ & $1.00$ & $2^{-11}$ & $2.44 \cdot 10^{-4}$ & $1.98$ & $2^{-11}$ & $3.19 \cdot 10^{-6}$ & $2.95$ \\
$2^{-12}$ & $2.21 \cdot 10^{-4}$ & $1.00$ & $2^{-12}$ & $6.13 \cdot 10^{-5}$ & $1.99$ & $2^{-12}$ & $4.07 \cdot 10^{-7}$ & $2.97$ \\
\noalign{\smallskip} \hline

\noalign{\smallskip}
\multicolumn{3}{c}{MPLM-$5(4)$} & \multicolumn{3}{|c}{MPLM-$7(5)$} & \multicolumn{3}{|c}{MPLM-$10(6)$}  \\
\noalign{\smallskip}\hline\noalign{\smallskip}
$0.80\cdot h$ & $E(h)$ & $\hat{p}$ & $0.80\cdot h$ & $E(h)$ & $\hat{p}$ & $0.80\cdot h$ & $E(h)$ & $\hat{p}$ \\
$2^{-5}$ & $8.40 \cdot 10^{-2}$ & --- & $2^{-5}$ & $5.80 \cdot 10^{-2}$ & --- & $2^{-5}$ & $4.68 \cdot 10^{-2}$ & --- \\
$2^{-6}$ & $1.54 \cdot 10^{-2}$ & $2.44$ & $2^{-6}$ & $8.60 \cdot 10^{-3}$ & $2.75$ & $2^{-6}$ & $5.66 \cdot 10^{-3}$ & $3.05$ \\
$2^{-7}$ & $1.90 \cdot 10^{-3}$ & $3.03$ & $2^{-7}$ & $7.48 \cdot 10^{-4}$ & $3.52$ & $2^{-7}$ & $3.89 \cdot 10^{-4}$ & $3.86$ \\
$2^{-8}$ & $1.78 \cdot 10^{-4}$ & $3.41$ & $2^{-8}$ & $4.70 \cdot 10^{-5}$ & $3.99$ & $2^{-8}$ & $1.75 \cdot 10^{-5}$ & $4.47$ \\
$2^{-9}$ & $1.40 \cdot 10^{-5}$ & $3.67$ & $2^{-9}$ & $2.27 \cdot 10^{-6}$ & $4.37$ & $2^{-9}$ & $5.41 \cdot 10^{-7}$ & $5.02$ \\
$2^{-10}$ & $9.93 \cdot 10^{-7}$ & $3.82$ & $2^{-10}$ & $9.09 \cdot 10^{-8}$ & $4.64$ & $2^{-10}$ & $1.27 \cdot 10^{-8}$ & $5.42$ \\
$2^{-11}$ & $6.63 \cdot 10^{-8}$ & $3.90$ & $2^{-11}$ & $3.26 \cdot 10^{-9}$ & $4.80$ & $2^{-11}$ & $2.48 \cdot 10^{-10}$ & $5.68$ \\
$2^{-12}$ & $4.30 \cdot 10^{-9}$ & $3.95$ & $2^{-12}$ & $1.09 \cdot 10^{-10}$ & $4.90$ & $2^{-12}$ & $1.63 \cdot 10^{-11}$ & $3.92$ \\
\noalign{\smallskip} \hline

\end{tabular}
\end{table}

Figure \ref{fig:Bruss_Test_WPD_Julia} shows, for $p\geq 3,$ a comparison between the MPLM$-k(p)$ and the MPDeC$(p)$ methods in terms of the accuracy-cost trade off, highlighting the advantages of the former in terms of computational efficiency. Here, the mean error is computed by following \eqref{eq:Relative_Error_all_components}. The outcomes of the comparison, which differ from those obtained in \hyperref[Test1]{Test 1} and \hyperref[Test2]{Test 2}, align with the inherent characteristics of each scheme. Specifically, MPDeC methods address multiple linear systems at each step, whereas MPLM methods necessitate an initialization procedure and the calculation of the PWDs. Consequently, for systems with smaller dimensions, the former demonstrate better efficiency. Conversely, as the number of components increases, MPLM methods exhibit reduced computational complexity and become more advantageous.

\begin{figure}
\begin{center}
	\scalebox{1}{\includegraphics{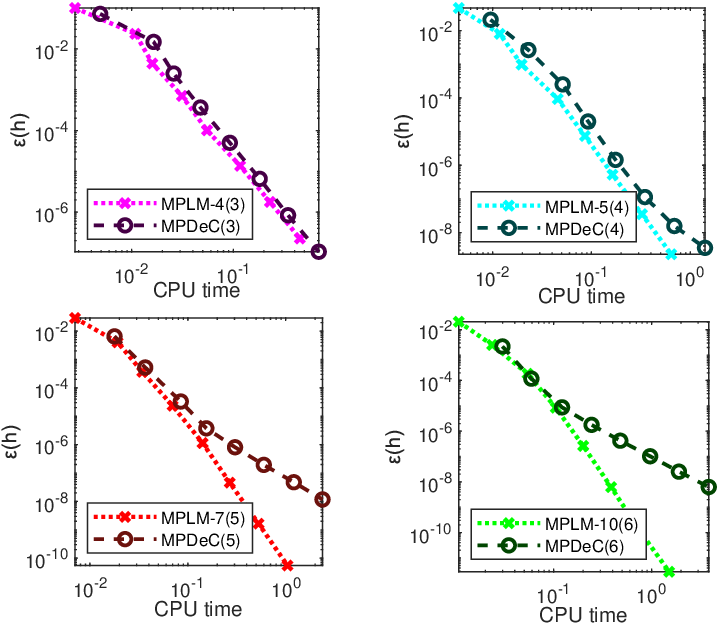}}
\end{center}
\caption{Work Precision Diagram: Mean Error versus CPU time for the different methods applied to \hyperref[Test3]{Test 3}. $h=2^{-m}$, $m=5,\ldots,12.$}
\label{fig:Bruss_Test_WPD_Julia}
\end{figure}

\subsection{Test 4: SACEIRQD COVID-19 model}\label{Test4}
Our fourth test problem deals with the modified
Susceptible-Infected-Recovered-Dead epidemic model  
\begin{equation}\label{eq:SIRD}
			\begin{aligned}
				& S^{\prime}(t)=-\left(\alpha+\dfrac{\beta I(t)+\sigma A(t)}{N_P}+\eta\right) S(t), \\
                & A^{\prime}(t)=-\tau A(t) +\xi E(t), \\ 
                & C^{\prime}(t)=\alpha S(t)-\mu C(t), \\
				& E^{\prime}(t)=\left(\dfrac{\beta I(t)+\sigma A(t)}{N_P}+\eta\right) S(t)+\mu C(t)-(\gamma + \xi)E(t), \\ 
                & I^{\prime}(t)=\tau A(t)+\gamma E(t) -\delta I(t), \\
				& R^{\prime}(t)=\lambda Q(t), \\
                & Q^{\prime}(t)=\delta I(t)-\lambda Q(t)-k_d Q(t), \\
                & D^{\prime}(t)=k_d Q(t).
			\end{aligned}
 \end{equation}
 
Originally introduced in \cite{Sen2021} to analyze COVID-19 data, it incorporates the effect of asymptomatic infections and the influence of containment, isolation and quarantine measures on the spread of the disease. 
The non-negative states variables of the model \eqref{eq:SIRD} represent the sizes at time $t$ (days) of the eight disjoint compartments in which a closed population of $N_P$ individuals is partitioned: susceptible (S), asymptomatic (A), confined (C), exposed (E), infected (I),  recovered (R), quarantined (Q), dead (D). The absence of migration turnover in the model leads to the conservation law 
\begin{equation*}
    S(t)+A(t)+C(t)+E(t)+I(t)+R(t)+Q(t)+D(t)=N_P, \qquad \; \forall t\geq 0.
\end{equation*}
We refer to \cite{Sen2021} for the details on the physical interpretation of the positive constants $\alpha$, $\beta$, $\gamma$, $\delta$, $\sigma$, $\eta$, $\tau$, $\xi$, $\mu$, $\lambda$ and $k_d$. 

In order to reformulate the system \eqref{eq:SIRD} as a PDS, we introduce the function \\ $\bm{y}(t)=(S(t),A(t),C(t),E(t),I(t),R(t),Q(t),D(t))^\mathsf{T}$ and the production-destruction terms as follows
\begin{equation*}
    \begin{split}
        & p_{24}(\bm{y})=d_{42}(\bm{y})=\xi y_4, \qquad\qquad\qquad\qquad\quad\;\
        p_{31}(\bm{y})=d_{13}(\bm{y})=\alpha y_1, \\
        & p_{41}(\bm{y})=d_{14}(\bm{y})=y_1\left(\eta+\frac{\beta y_5+\sigma y_2}{N_P}\right), \qquad 
        p_{43}(\bm{y})=d_{34}(\bm{y})=\mu y_3, \\
        & p_{52}(\bm{y})=d_{25}(\bm{y})=\tau y_2, \qquad\qquad\qquad\qquad\quad\;\
        p_{54}(\bm{y})=d_{45}(\bm{y})=\gamma y_4, \\
        & p_{67}(\bm{y})=d_{76}(\bm{y})=\lambda y_7, \qquad\qquad\qquad\qquad\quad\;\
        p_{75}(\bm{y})=d_{57}(\bm{y})=\delta y_5, \\
        & p_{87}(\bm{y})=d_{78}(\bm{y})=K_d y_7
    \end{split}
\end{equation*}
and $p_{ij}(\bm{y}) = d_{ji}(\bm{y}) = 0$ for all other combinations of $i,j \in [1,8]\cap \mathbb{N}$.

In \cite[Table 1]{Sen2021} the parameters of the model were fitted to the COVID-19 time series datasets of infected, recovered and death cases for different countries. Here, we adopt for the parameters of the model the values provided by the authors for Italy, i.e.
\begin{equation}\label{eq:SIRD_Parameters}
    \begin{split}
        & N_P=6.046 \cdot 10^7, \qquad \!\!\!\!\!\!
        \alpha=0.0194, \qquad \quad \;\
        \!\beta=7.567, \qquad\quad \;\ \; \; \mu=2.278 \cdot 10^{-6}, \\
        & \eta=9.180 \cdot 10^{-7}, \ \qquad \!\!\!\!\!\!
        \sigma=1.4633 \cdot 10^{-3}, \quad \!\tau=1.109 \cdot 10^{-4}, \quad \;
        \; \xi=0.263, \\
        &  \gamma=0.021, \qquad \qquad \quad \!\!\!\!\!\!
        \delta=0.077, \qquad \qquad \!\lambda_0=0.157, \qquad \quad \: \;\; \lambda_1=0.025,  \\
        & k_{d0}=0.779, \qquad \quad k_{d1}=0.061, \qquad \quad \;  \bm{y}^0=(60459997,0,0,1,1,0,1,0)^\mathsf{T}.
    \end{split}
\end{equation}
Furthermore, we set $\lambda=10^{-4}\lambda_0 \int_0^{10^4} e^{-\lambda_1 t} dt$ and $k_d=10^{-4}k_{d0} \int_0^{10^4} e^{-k_{d1} t} dt.$

  \begin{figure}
	\begin{center}
		\scalebox{1.0}{\includegraphics{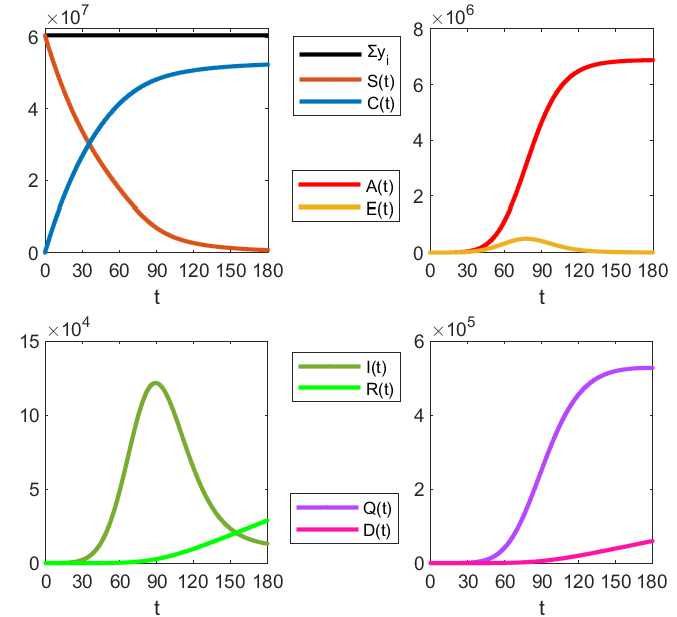}}
	\end{center}
	\caption{Numerical solution of \hyperref[Test4]{Test 4} by MPLM-$10(6)$ with $h=1.4\cdot 2^{-1}.$ } 
\label{fig:SIRD_Test_PL}
\end{figure} 

The mathematical statements concerning the positivity (Lemma \ref{lem:positivity}) and conservativity (Lemma \ref{lem:conservativity}) of the MPLM methods are confirmed in Figure \ref{fig:SIRD_Test_PL}, where the numerical simulation of \eqref{eq:SIRD} by the MPLM-$10(6)$ for $t\in [0,180]$ is reported. For this test, to avoid the issues of null components in the initial value, we set $y_i^0=\texttt{realmin}\approx2.26\cdot 10^{-308}$ for $i=2,3,6,8.$ Furthermore, due to the presence of large values in the components of the solution, we consider the relative maximum error 
\begin{equation*}
    e(h)=\dfrac{E(h)}{\max_{0\leq n \leq T/h}\|\bm{y}^{n(ref)}\|_\infty},
\end{equation*}
with $E(h)$ defined in \eqref{eq:E(h)}. From Table \ref{tab:SIRD_Test_Err} and Figures \ref{fig:SIRD_Test_1} and \ref{fig:SIRD_Test_2} it is clear that the numerical solution behaves in accordance with the theoretical results and the experimental order of convergence coincides with the expected one. Moreover, the work precision diagram of Figure \ref{fig:SIRD_Test_WPD} confirms, also for this test, the trends observed in \hyperref[Test3]{Test 3}.

%
\begin{figure}
	\begin{center}
		\scalebox{1}{\includegraphics{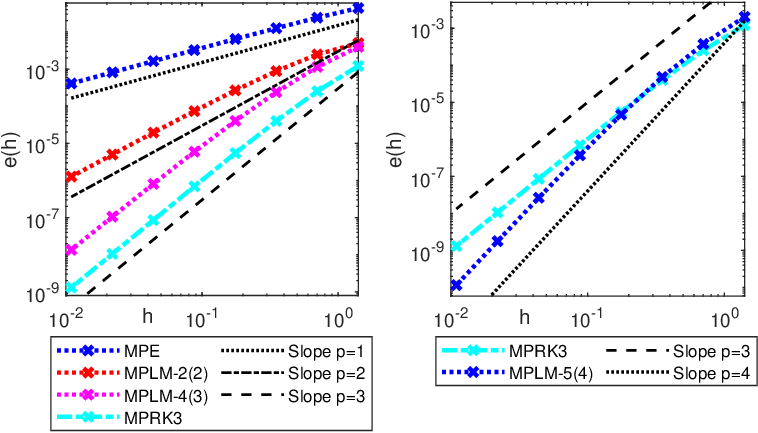}}
	\end{center}
	\caption{Experimental order for MPE, MPLM-$2(2),$ MPLM-$4(3),$ MPLM-$5(4)$ and MPRK3 applied to \hyperref[Test4]{Test 4}.} 
\label{fig:SIRD_Test_1}
\end{figure}
\begin{figure}
	\begin{center}
		\scalebox{1}{\includegraphics{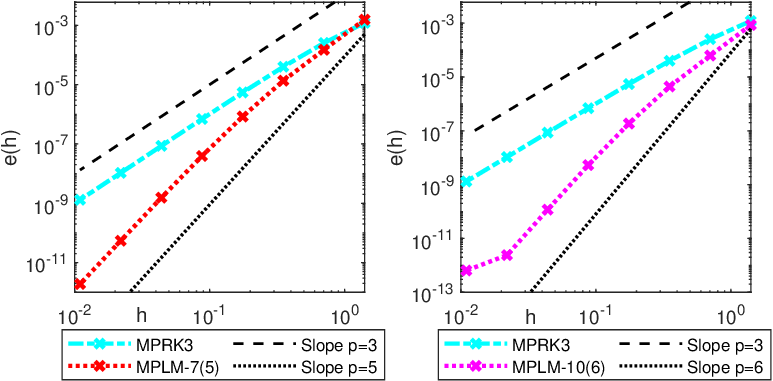}}
	\end{center}
	\caption{Experimental order for MPLM-$7(5),$ MPLM-$10(6)$ and MPRK3 applied to \hyperref[Test4]{Test 4}.} 
\label{fig:SIRD_Test_2}
\end{figure}
\begin{figure}
\begin{center}
	\scalebox{1}{\includegraphics{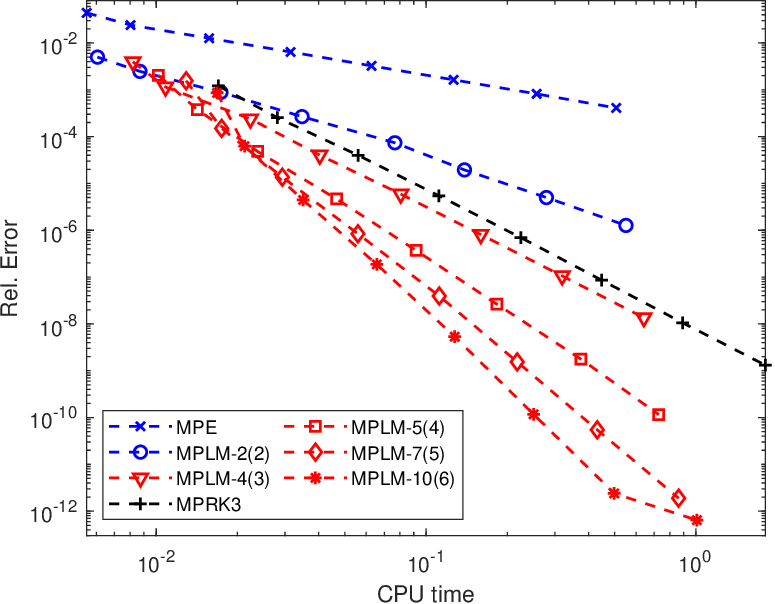}}
\end{center}
\caption{Work Precision Diagram: Error versus CPU time for the different methods applied to \hyperref[Test4]{Test 4}. $h=T/2^{7+m}$, $m=0,\ldots,7.$}
\label{fig:SIRD_Test_WPD}
\end{figure}
\begin{table} \center \footnotesize
\caption{Experimental convergence of the numerical solutions to \hyperref[Test4]{Test 4}.}
\label{tab:SIRD_Test_Err}       
\smallskip
\begin{tabular}{ccc|ccc|ccc}

\hline\noalign{\smallskip}
\multicolumn{3}{c}{MPE} & \multicolumn{3}{|c}{MPLM-$2(2)$} & \multicolumn{3}{|c}{MPLM-$4(3)$}  \\
\noalign{\smallskip}\hline\noalign{\smallskip}
$0.35\cdot h$ & $e(h)$ & $\hat{p}$ & $0.35\cdot h$ & $e(h)$ & $\hat{p}$ & $0.35\cdot h$ & $e(h)$ & $\hat{p}$ \\
$2^{-1}$ & $4.39 \cdot 10^{-2}$ & --- & $2^{-1}$ & $4.98 \cdot 10^{-3}$ & --- & $2^{-1}$ & $3.96 \cdot 10^{-3}$ & --- \\
$2^{-2}$ & $2.41 \cdot 10^{-2}$ & $0.87$ & $2^{-2}$ & $2.47 \cdot 10^{-3}$ & $1.01$ & $2^{-2}$ & $1.14 \cdot 10^{-3}$ & $1.79$ \\
$2^{-3}$ & $1.26 \cdot 10^{-2}$ & $0.94$ & $2^{-3}$ & $8.82 \cdot 10^{-4}$ & $1.49$ & $2^{-3}$ & $2.38 \cdot 10^{-4}$ & $2.26$ \\
$2^{-4}$ & $6.42 \cdot 10^{-3}$ & $0.97$ & $2^{-4}$ & $2.67 \cdot 10^{-4}$ & $1.72$ & $2^{-4}$ & $4.02 \cdot 10^{-5}$ & $2.56$ \\
$2^{-5}$ & $3.24 \cdot 10^{-3}$ & $0.99$ & $2^{-5}$ & $7.38 \cdot 10^{-5}$ & $1.85$ & $2^{-5}$ & $5.92 \cdot 10^{-6}$ & $2.76$ \\
$2^{-6}$ & $1.63 \cdot 10^{-3}$ & $0.99$ & $2^{-6}$ & $1.94 \cdot 10^{-5}$ & $1.92$ & $2^{-6}$ & $8.09 \cdot 10^{-7}$ & $2.87$ \\
$2^{-7}$ & $8.17 \cdot 10^{-4}$ & $1.00$ & $2^{-7}$ & $4.99 \cdot 10^{-6}$ & $1.96$ & $2^{-7}$ & $1.06 \cdot 10^{-7}$ & $2.93$ \\
$2^{-8}$ & $4.09 \cdot 10^{-4}$ & $1.00$ & $2^{-8}$ & $1.27 \cdot 10^{-6}$ & $1.98$ & $2^{-8}$ & $1.36 \cdot 10^{-8}$ & $2.97$ \\
\noalign{\smallskip} \hline

\noalign{\smallskip}
\multicolumn{3}{c}{MPLM-$5(4)$} & \multicolumn{3}{|c}{MPLM-$7(5)$} & \multicolumn{3}{|c}{MPLM-$10(6)$}  \\
\noalign{\smallskip}\hline\noalign{\smallskip}
$0.35\cdot h$ & $e(h)$ & $\hat{p}$ & $0.35\cdot h$ & $e(h)$ & $\hat{p}$ & $0.35\cdot h$ & $e(h)$ & $\hat{p}$ \\
$2^{-1}$ & $2.03 \cdot 10^{-3}$ & --- & $2^{-1}$ & $1.55 \cdot 10^{-3}$ & --- & $2^{-1}$ & $8.56 \cdot 10^{-4}$ & --- \\
$2^{-2}$ & $3.76 \cdot 10^{-4}$ & $2.43$ & $2^{-2}$ & $1.50 \cdot 10^{-4}$ & $3.38$ & $2^{-2}$ & $6.25 \cdot 10^{-5}$ & $3.78$ \\
$2^{-3}$ & $4.83 \cdot 10^{-5}$ & $2.96$ & $2^{-3}$ & $1.36 \cdot 10^{-5}$ & $3.46$ & $2^{-3}$ & $4.43 \cdot 10^{-6}$ & $3.82$ \\
$2^{-4}$ & $4.65 \cdot 10^{-6}$ & $3.38$ & $2^{-4}$ & $8.41 \cdot 10^{-7}$ & $4.02$ & $2^{-4}$ & $1.86 \cdot 10^{-7}$ & $4.57$ \\
$2^{-5}$ & $3.71 \cdot 10^{-7}$ & $3.65$ & $2^{-5}$ & $3.94 \cdot 10^{-8}$ & $4.42$ & $2^{-5}$ & $5.32 \cdot 10^{-9}$ & $5.13$ \\
$2^{-6}$ & $2.64 \cdot 10^{-8}$ & $3.81$ & $2^{-6}$ & $1.54 \cdot 10^{-9}$ & $4.67$ & $2^{-6}$ & $1.17 \cdot 10^{-10}$ & $5.50$ \\
$2^{-7}$ & $1.78 \cdot 10^{-9}$ & $3.90$ & $2^{-7}$ & $5.46 \cdot 10^{-11}$ & $4.82$ & $2^{-7}$ & $2.39 \cdot 10^{-12}$ & $5.62$ \\
$2^{-8}$ & $1.16 \cdot 10^{-10}$ & $3.94$ & $2^{-8}$ & $1.89 \cdot 10^{-12}$ & $4.85$ & $2^{-8}$ & $6.43 \cdot 10^{-13}$ & $1.90$ \\
\noalign{\smallskip} \hline

\end{tabular}
\end{table}

The efficient integration of \eqref{eq:SIRD}-\eqref{eq:SIRD_Parameters} using the MPDeC codes in \cite{Torlo_Rep} is not feasible and results in a numerical solution exhibiting several \texttt{NAN}. To overcome this issue, we consider the system \eqref{eq:SIRD} with the same parameters in \eqref{eq:SIRD_Parameters} but with modified initial values 
\begin{equation}\label{eq: SIRD_Initial_Cond_2}
    \bm{\bar{y}}^0=(60459997,10^{-10},10^{-10},1,1,10^{-10},1,10^{-10})^\mathsf{T}, \qquad \mbox{and} \qquad \bm{\bar{\bar{y}}}^0=10^4  \bm{e}.
\end{equation} 
The work precision diagrams associated with the MPLM and MPDeC simulations of \hyperref[Test4]{Test 4}, initialized with $\bm{y}(0)=\bm{\bar{y}}^0$ and $\bm{y}(0)=\bm{\bar{\bar{y}}}^0$, are depicted in Figures \ref{fig:WPD_MPLM_DeC_SIRD_10000} and \ref{fig:WPD_MPLM_DeC_SIRD_men10}, respectively. The plotted data indicate that, at least for $p>3,$ MPLM methods require fewer computational resources than MPDeC schemes to achieve a predefined level of accuracy. On the basis of the comparative analysis we performed, the MPLM integrators emerge as more suitable for accurately simulating real-world scenarios of the infectious disease outbreak model \eqref{eq:SIRD}. Furthermore, even in the unrealistic case of $\bm{y}(0)=\bm{\bar{\bar{y}}}^0$, they demonstrate superior efficiency compared to MPDeC methods.

\begin{figure}
\begin{center}
	\scalebox{1}{\includegraphics{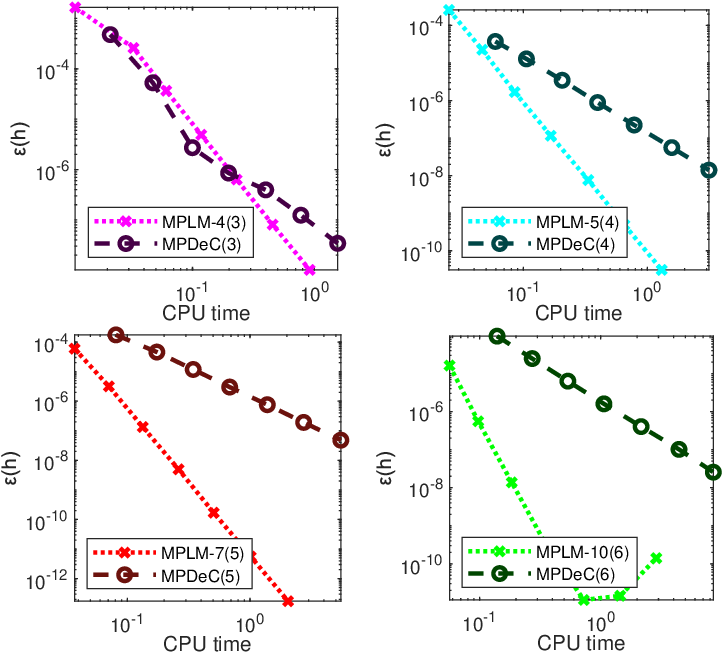}}
\end{center}
\caption{Work Precision Diagram: Mean Error versus CPU time for the different methods applied to \hyperref[Test4]{Test 4} with $\bm{y}(0)=\bm{\bar{y}}^0$ in \eqref{eq: SIRD_Initial_Cond_2}. $h=2^{-m}$, $m=2,\ldots,8.$}
\label{fig:WPD_MPLM_DeC_SIRD_10000}
\end{figure}
\begin{figure}
\begin{center}
	\scalebox{1}{\includegraphics{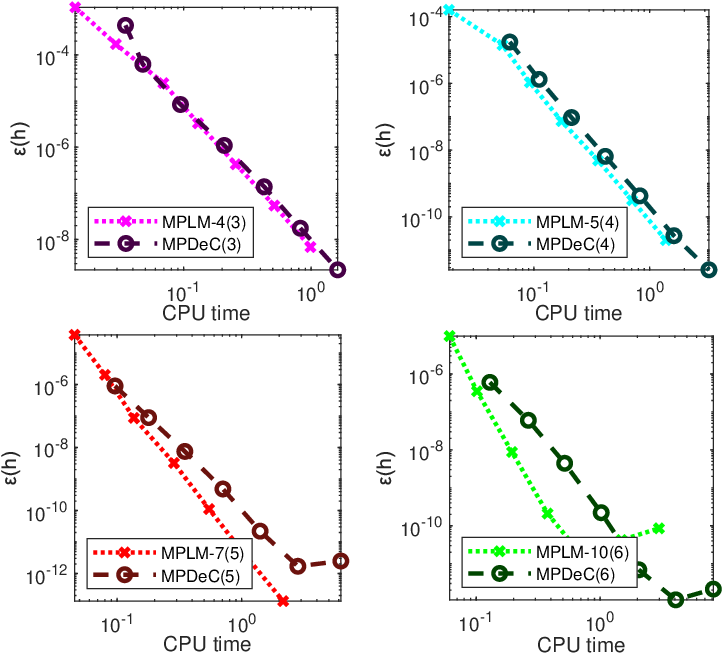}}
\end{center}
\caption{Work Precision Diagram: Mean Error versus CPU time for the different methods applied to \hyperref[Test4]{Test 4} with $\bm{y}(0)=\bm{\bar{\bar{y}}}^0$ in \eqref{eq: SIRD_Initial_Cond_2}. $h=2^{-m}$, $m=2,\ldots,8.$}
\label{fig:WPD_MPLM_DeC_SIRD_men10}
\end{figure}

\subsection{Test 5: spatially heterogeneous diffusion equation}\label{Test5}
With the aim of assessing the performance of MPLM schemes on larger-scale problems, we turn our attention to the following Partial Differential Equation (PDE)
\begin{equation}\label{eq:PDE}
    \dfrac{\partial u(x,t)}{\partial t}-\dfrac{\partial}{\partial x} \left(\mathfrak{D}(x) \ \dfrac{\partial u(x,t)}{\partial x}  \right)=0, \qquad \qquad \begin{matrix}
        0\leq x \leq L, \\
        0\leq t \leq T.
    \end{matrix}
\end{equation}
The PDE \eqref{eq:PDE} serves as a versatile mathematical framework for modeling a broad spectrum of phenomena extending beyond chemical diffusion and heat conduction \cite{Calore}. For instance, in \cite{Joyner}, it is derived from the Black–Scholes model, thereby broadening its applications to qualitative finance. Similarly, in \cite{Piante}, the same equation is utilized to describe nutrient uptake by plant root hairs. Furthermore, as argued in \cite{Bridge}, it may be applicable in simulating the onset of corrosion in concrete bridge beams due to chloride ion diffusion.

Our experiment is conducted within a conservative setting, wherein we define initial conditions and Neumann zero-flux boundary conditions as follows
\begin{equation}\label{eq:PDE_Conditions}
u(x,0)=f(x), \qquad \frac{\partial u}{\partial x}(0,t)=\frac{\partial u}{\partial x}(L,t)=0.
\end{equation}
In this scenario, the conservation law
\begin{equation}\label{eq:PDE_onservazione}
\int_{0}^L u(x,t) \ dx =
\int_{0}^L f(x) \ dx, \qquad \forall t\geq 0,
\end{equation}
holds true since integration with respect to $x$ of both sides of \eqref{eq:PDE} leads to
\begin{equation*}
\frac{d}{dt} \int_{0}^L u(x,t) \ dx = \mathfrak{D}(L) \frac{\partial u(L,t)}{\partial x} - \mathfrak{D}(0) \frac{\partial u(0,t)}{\partial x} = 0.
\end{equation*}
In order to obtain a numerical solution to equation \eqref{eq:PDE}-\eqref{eq:PDE_Conditions} that preserves positivity and satisfies a discrete equivalent of \eqref{eq:PDE_onservazione}, we introduce a finite volume semi-discretization for the spatial variable (see, for instance, \cite[Chapter 4]{LeVeque_2002}) and subsequently employ modified Patankar schemes to integrate the resulting system of ordinary differential equations.
\subsubsection{Spatial semi-discretization and conservative PDS} 
Let $\Delta x>0$ and $\{x_j\}_{j \geq 0}$ be a uniform mesh such that $x_j=\left(j+\frac{1}{2}\right)\Delta x,$ represents the center of a one-dimensional cell with edges $x_{j-\frac{1}{2}}$ and $x_{j+\frac{1}{2}}.$ Consider the approximations $v_j(t)$ of the average solution value over each grid cell, defined as
\begin{equation}\label{eq:significato_approx_volumi}
    v_j(t) \approx u_j(t)=\dfrac{1}{\Delta x} \int_{x_{j-\frac{1}{2}}}^{x_{j+\frac{1}{2}}} u(x,t) \ dx, \qquad \; j=0,\dots, N_x,
\end{equation}
with $\Delta x  N_x =L.$ Integrating the PDE \eqref{eq:PDE} over the $j$-th cell and dividing by $\Delta x$ yields the exact differential rule 
\begin{equation}\label{eq:exact_update_rule}
    u^\prime_j(t)=- \frac{1}{\Delta x} \left(F_{j+\frac{1}{2}}(t)-F_{j-\frac{1}{2}}(t)\right), \qquad \quad j=0,\dots, N_x,
\end{equation}
where $F_{j\pm\frac{1}{2}}(t)= -\mathfrak{D}\!\left(x_{j\pm\frac{1}{2}}\right)\partial_x u\!\left(x_{j\pm\frac{1}{2}},t\right)$ denotes the flux through the edges of the cell.
A semi-discrete scheme is then derived by a midpoint approximation of the fluxes in \eqref{eq:exact_update_rule}. Specifically, for the interior cells ($j=1,\dots, N_x-1$), we set
\begin{equation}\label{eq:SemiDiscr_Interni}
    v^\prime_j(t)=\dfrac{\mathfrak{D}\!\left(x_{j+\frac{1}{2}}\right)v_{j+1}(t)-\left(\mathfrak{D}\!\left(x_{j+\frac{1}{2}}\right)-\mathfrak{D}\!\left(x_{j-\frac{1}{2}}\right)\right)v_{j}(t)+\mathfrak{D}\!\left(x_{j-\frac{1}{2}}\right)v_{j-1}(t)}{\Delta x^2},
\end{equation}
while for the boundary ones the conditions \eqref{eq:PDE_Conditions} become
\begin{equation}\label{eq:SemiDiscr_Frontiera}
    v^\prime_0(t)= \mathfrak{D}\!\left(x_{\frac{1}{2}}\right)\dfrac{v_{1}(t)-v_{0}(t)}{\Delta x^2}, \qquad v^\prime_{N_x}(t)= \mathfrak{D}\!\left(x_{N_x-\frac{1}{2}}\right)\dfrac{v_{N_x-1}(t)-v_{N_x}(t)}{\Delta x^2}.
\end{equation}
Here, fictitious cells with centers $x_{-1}$ and $x_{N_x +1},$ located just beyond the computational domain, have been introduced for deriving \eqref{eq:SemiDiscr_Frontiera}.

The second order semi-discretization scheme \eqref{eq:SemiDiscr_Interni}-\eqref{eq:SemiDiscr_Frontiera} corresponds to the linear system of ordinary differential equations 
\begin{equation}\label{eq:PDE_to_ODE}
    \begin{split}
        \bm{v}^\prime(t)&=\dfrac{1}{\Delta x^2}
        \begin{pmatrix}
            - \mathfrak{D}_{\frac{1}{2}} &  \mathfrak{D}_{\frac{1}{2}} & & \\
            \mathfrak{D}_{\frac{1}{2}} & -\mathfrak{D}_{\frac{1}{2}}-\mathfrak{D}_{\frac{3}{2}} & \mathfrak{D}_{\frac{3}{2}} & \\
             & \ddots & \ddots & \ddots &  \\
             &  \mathfrak{D}_{N_x-\frac{3}{2}} & -\mathfrak{D}_{N_x-\frac{3}{2}}-\mathfrak{D}_{N_x-\frac{1}{2}} & \mathfrak{D}_{N_x-\frac{1}{2}} \\
             & & \mathfrak{D}_{N_x-\frac{1}{2}} & -\mathfrak{D}_{N_x-\frac{1}{2}}
             \end{pmatrix} 
        \cdot\bm{v}(t) \\
        &=A(\Delta x) \cdot\bm{v}(t),
    \end{split}
\end{equation}
with $\bm{v}(t)=(v_0(t),\dots,v_{N_x}(t))^\mathsf{T}\in \mathbb{R}^{N_x+1}$ and $\mathfrak{D}_j=\mathfrak{D}(x_j),$ for each $j.$ The following result proves, for the solution to \eqref{eq:PDE_to_ODE}, the semi-discrete counterpart of the conservation law \eqref{eq:PDE_onservazione}. 
\begin{theorem}\label{thm:Spatial_Conservative}
		Let $u(x,t)$ be the continuous solution to \eqref{eq:PDE}-\eqref{eq:PDE_Conditions} for $(x,t)\in [0,L]\times[0,T],$ with positive $L$ and $T.$ Let $\{v_j(t)\}_{j \geq 0}$ be its approximation (in the sense of \eqref{eq:significato_approx_volumi}) computed by \eqref{eq:PDE_to_ODE} with $\Delta x=L/N_x.$ Then, independently of $\Delta x>0,$
        \begin{equation}\label{eq:PDE_onservazione_semidisc}
            \Delta x \sum_{j=0}^{N_x} v_j(t)=\Delta x \sum_{j=0}^{N_x} f(x_j), \qquad \qquad \forall t\geq 0.
        \end{equation}
\end{theorem}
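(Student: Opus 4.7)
The plan is to observe that the conservation law at the semi-discrete level amounts to a purely algebraic property of the matrix $A(\Delta x)$ in \eqref{eq:PDE_to_ODE}. Specifically, I would show that $\bm{e}^\mathsf{T} A(\Delta x) = \bm{0}^\mathsf{T}$, which immediately implies that $\bm{e}^\mathsf{T}\bm{v}(t)$ is a conserved quantity of the linear ODE system, and then identify this invariant by using the initial datum $v_j(0)=f(x_j)$.

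First, I would verify the column-sum property of $A(\Delta x)$ by direct inspection of its entries. The first column contributes $-\mathfrak{D}_{\frac{1}{2}}+\mathfrak{D}_{\frac{1}{2}}=0$; the last column contributes $\mathfrak{D}_{N_x-\frac{1}{2}}-\mathfrak{D}_{N_x-\frac{1}{2}}=0$; and for $1\leq j\leq N_x-1$ the $j$-th column contributes $\mathfrak{D}_{j-\frac{1}{2}}+\left(-\mathfrak{D}_{j-\frac{1}{2}}-\mathfrak{D}_{j+\frac{1}{2}}\right)+\mathfrak{D}_{j+\frac{1}{2}}=0$. This telescoping cancellation is precisely the semi-discrete analogue of the flux-balance argument that yields \eqref{eq:PDE_onservazione} at the continuous level, where the boundary fluxes vanish due to the Neumann conditions \eqref{eq:PDE_Conditions}.

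Next, from $\bm{e}^\mathsf{T}A(\Delta x)=\bm{0}^\mathsf{T}$ and \eqref{eq:PDE_to_ODE} I would deduce
\begin{equation*}
\frac{d}{dt}\bigl(\bm{e}^\mathsf{T}\bm{v}(t)\bigr)=\bm{e}^\mathsf{T}A(\Delta x)\bm{v}(t)=0,\qquad t\in[0,T],
\end{equation*}
so $\bm{e}^\mathsf{T}\bm{v}(t)=\bm{e}^\mathsf{T}\bm{v}(0)=\sum_{j=0}^{N_x}f(x_j)$, and multiplying both sides by $\Delta x$ yields \eqref{eq:PDE_onservazione_semidisc}. Note that the argument holds for arbitrary $\Delta x>0$ since the column-sum identity is independent of the grid spacing and of the values of the diffusion coefficient at the cell edges.

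I do not anticipate any substantive obstacle here; the whole argument reduces to a short linear-algebra computation, and the only mild care needed is the correct treatment of the two boundary rows, which encode the zero-flux conditions and make the first and last columns telescope properly.
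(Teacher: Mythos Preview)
Your proposal is correct and essentially matches the paper's proof: both reduce the claim to the algebraic identity that $\bm{e}$ is in the left null space of $A(\Delta x)$, then conclude $\frac{d}{dt}\bm{e}^\mathsf{T}\bm{v}(t)=0$ and evaluate at $t=0$. The only cosmetic difference is that the paper invokes the symmetry of $A$ and the (visually obvious) row-sum identity $A\bm{e}=\bm{0}$ to get $\bm{e}^\mathsf{T}A=(A\bm{e})^\mathsf{T}=\bm{0}^\mathsf{T}$, whereas you verify the column sums directly; since $A$ is symmetric these are the same computation.
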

\begin{proof}
    Given $\bm{f}=(f(x_0),\dots,f(x_{N_x}))^\mathsf{T},$ the equality $\Delta x \ \bm{e}^\mathsf{T} \bm{v}(0)=\Delta x \ \bm{e}^\mathsf{T}\bm{f}$ directly comes from the boundary conditions \eqref{eq:PDE_Conditions}. Therefore, for assuring \eqref{eq:PDE_onservazione_semidisc}, it suffices to show that $(\bm{e}^\mathsf{T} v_j(t))^\prime=0.$ The matrix $A\in \mathbb{R}^{(N_x+1)\times (N_x+1)} $ is symmetric, hence  $\bm{e}^\mathsf{T} \bm{v}^\prime(t)=\bm{e}^\mathsf{T} (A \bm{v}(t))=(A \bm{e})^\mathsf{T}\bm{v}(t)=\bm{0}^\mathsf{T}\bm{v}(t)=0,$ which yields the result.
\end{proof}

The differential system \eqref{eq:PDE_to_ODE} can be rewritten as a PDS of the form \eqref{eq:sistema_compatto} with $\bm{y}(t)=\bm{v}(t)\in \mathbb{R}^{N_x+1},$ $D(\bm{y}(t))=P(\bm{y}(t))^\mathsf{T}\in \mathbb{R}^{(N_x+1)\times (N_x+1)}$ and
\begin{equation}\label{eq:PDE_PDS}
    P(\bm{y})=\dfrac{1}{\Delta x^2}
        \begin{pmatrix}
            0 &  y_2 \mathfrak{D}_{\frac{1}{2}}  & & \\
            y_1 \mathfrak{D}_{\frac{1}{2}} & 0 & y_3 \mathfrak{D}_{\frac{3}{2}} & \\
             & \ddots & \ddots & \ddots &  \\
             & y_{N_x-2} \mathfrak{D}_{N_x-\frac{3}{2}} & 0 & y_{N_x} \mathfrak{D}_{N_x-\frac{1}{2}} \\
             & & y_{N_x-1} \mathfrak{D}_{N_x-\frac{1}{2}} & 0
             \end{pmatrix}.
\end{equation}
Thus, once the equivalence of \eqref{eq:PDE_to_ODE} with a fully conservative PDS is established, the property \eqref{thm:Convergence} proved with Theorem \ref{thm:Spatial_Conservative} automatically follows from \eqref{eq:int_primo}.

\subsubsection{Simulation results}
For our numerical experiments, we consider the PDE \eqref{eq:PDE}-\eqref{eq:PDE_Conditions} with $L=1,$ $T=60$ and initial condition
\begin{equation*}
    f(x)=2-2\sin^2\left(\dfrac{\pi}{2}-\dfrac{1}{4}\right).
\end{equation*}
Moreover, a space-dependent diffusion coefficient
\begin{equation*}
    \mathfrak{D}(x)=D_0 \left(x-\frac{2}{3}\right)^2 \frac{\tan^{-1}(2x-3)}{(2x-3)}+10^{-5}, \qquad \quad \mbox{with} \; D_0=10^{-2},
\end{equation*}
is introduced to simulate heterogeneous diffusion phenomena (see, for instance, \cite{Molly} and references therein).

Here, the solution to the PDE is approximated by integrating the PDS \eqref{eq:sistema_compatto}-\eqref{eq:PDE_PDS} with modified Patankar methods. Since the stability investigation of the semi-discretization \eqref{eq:PDE_to_ODE} falls outside the scope of this work, we empirically adjust the temporal step size $h$ to be smaller than the spatial one $\Delta x$. The outcomes of the simulation by the MPLM-$7(5)$ scheme with $h=5\cdot 10^{-4}$ and $\Delta x=5\cdot 10^{-3},$ are presented in Figure \ref{fig:Diffusion}. 

Table \ref{tab:Diffusion} presents the mean errors, as defined in \eqref{eq:Relative_Error_all_components}, along with the execution times for the Julia implementations of both the MPLM and MPDeC methods. The numerical residual
\begin{equation*}
   r(h)=\Delta x \max_{0\leq n\leq \frac{T}{h}} \left(\sum_{j=0}^{N_x} \left|v_j(t_n)-f(x_j)\right|\right),
\end{equation*}
on the semi-discrete conservation law \eqref{thm:Spatial_Conservative} is there reported, as well. The experimental results reveal that the MPLM methods demonstrate superior performances (cf. Figure \ref{fig:WPD_Diffusion}) and exhibit a more regular behavior for the residual $r(h).$

\begin{table} \center \footnotesize
\caption{Mean Errors, CPU times and numerical residuals for the different methods applied to \hyperref[Test5]{Test 5} with $\Delta x=10^{-2}$.}
\smallskip
\label{tab:Diffusion}       
\begin{tabular}{cccc|cccc}
\hline\noalign{\smallskip}
\multicolumn{4}{c|}{MPLM-$4(3)$} & \multicolumn{4}{|c}{MPDeC$(3)$}   \\
\noalign{\smallskip}\hline\noalign{\smallskip}
$h$ & $\varepsilon(h)$ & CPU time & $r(h)$ & $h$ & $\varepsilon(h)$ & CPU time & $r(h)$ \\
$2^{-9}$ & $2.73 \cdot 10^{-8}$ & $9.71 \cdot 10^{2}$ & $1.42 \cdot 10^{-15}$ & $2^{-9}$ & $7.97 \cdot 10^{-9}$ & $2.85 \cdot 10^{3}$ & $1.47 \cdot 10^{-13}$ \\
$2^{-10}$ & $4.71 \cdot 10^{-9}$ & $2.03 \cdot 10^{3}$ & $4.55 \cdot 10^{-15}$ & $2^{-10}$ & $1.12 \cdot 10^{-9}$ & $5.74 \cdot 10^{3}$ & $2.84 \cdot 10^{-16}$ \\
$2^{-11}$ & $6.95 \cdot 10^{-10}$ & $4.05 \cdot 10^{3}$ & $3.84 \cdot 10^{-14}$ & $2^{-11}$ & $1.51 \cdot 10^{-10}$ & $1.24 \cdot 10^{4}$ & $2.85 \cdot 10^{-16}$ \\
$2^{-12}$ & $9.64 \cdot 10^{-11}$ & $8.43 \cdot 10^{3}$ & $6.11 \cdot 10^{-14}$ & $2^{-12}$ & $3.90 \cdot 10^{-11}$ & $2.77 \cdot 10^{4}$ & $1.56 \cdot 10^{-12}$ \\
\noalign{\smallskip} \hline
\noalign{\smallskip}
\multicolumn{4}{c}{MPLM-$5(4)$} & \multicolumn{4}{|c}{MPDeC$(4)$}   \\
\noalign{\smallskip}\hline\noalign{\smallskip}
$h$ & $\varepsilon(h)$ & CPU time & $r(h)$ & $h$ & $\varepsilon(h)$ & CPU time & $r(h)$ \\
$2^{-9}$ & $4.53 \cdot 10^{-9}$ & $1.42 \cdot 10^{3}$ & $1.85 \cdot 10^{-15}$ & $2^{-9}$ & $5.33 \cdot 10^{-10}$ & $6.27 \cdot 10^{3}$ & $1.42 \cdot 10^{-16}$  \\
$2^{-10}$ & $4.32 \cdot 10^{-10}$ & $2.86 \cdot 10^{3}$ & $3.41 \cdot 10^{-15}$ & $2^{-10}$ & $4.35 \cdot 10^{-11}$ & $1.31 \cdot 10^{4}$  & $1.42 \cdot 10^{-16}$  \\
$2^{-11}$ & $3.89 \cdot 10^{-11}$ & $5.66 \cdot 10^{3}$ & $3.55 \cdot 10^{-15}$ & $2^{-11}$ & $3.58 \cdot 10^{-11}$ & $2.60 \cdot 10^{4}$  & $6.11\cdot 10^{-12}$ \\
$2^{-12}$ & $3.57 \cdot 10^{-11}$ & $1.14 \cdot 10^{4}$ & $2.50 \cdot 10^{-14}$ & $2^{-12}$ & $3.61 \cdot 10^{-11}$ & $5.46 \cdot 10^{4}$  & $5.30 \cdot 10^{-12}$ \\
\noalign{\smallskip} \hline
\end{tabular}
\end{table}

\begin{figure}
\begin{center}
	\scalebox{1}{\includegraphics{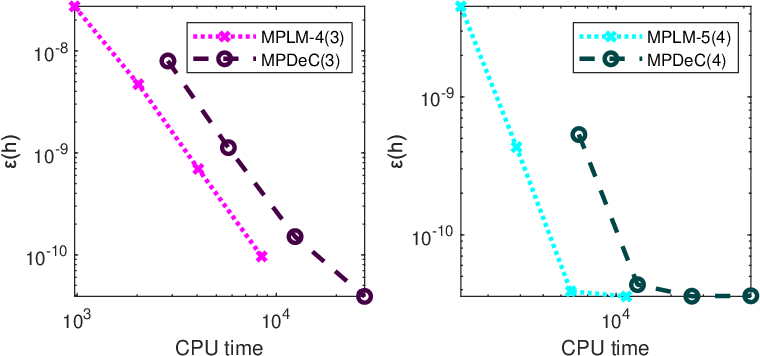}}
\end{center}
\caption{Work Precision Diagram: Mean Error versus CPU time for the different methods applied to \hyperref[Test5]{Test 5} with $\Delta x=10^{-2}$ and $h=2^{-m}$, $m=9,\ldots,12.$}
\label{fig:WPD_Diffusion}
\end{figure}

\begin{figure}
\begin{center}
	\scalebox{1}{\includegraphics{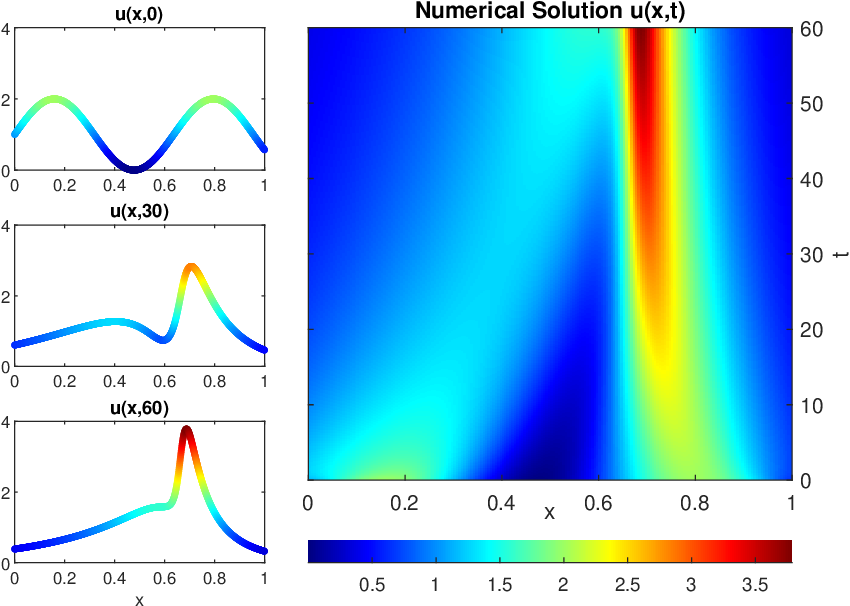}}
\end{center}
\caption{Numerical solution of \hyperref[Test5]{Test 5} by MPLM-$7(5)$ with $N_x=2000$ and $h=5\cdot 10^{-4}.$}
\label{fig:Diffusion}
\end{figure}

\section{Conclusions and perspectives}\label{sec:Conclusions}
In this manuscript, we introduced accurate linearly implicit time integrators specifically designed for production-destruction differential systems. Notably, the extension of the modified Patankar technique to multistep schemes has resulted in conservative numerical methods which retain, with no restrictions on the discretization steplength, the positivity of the solution and the linear invariant of the system. We carried out a theoretical investigation of the properties of the Patankar weight denominators which ensure the consistency and convergence of the proposed schemes. Additionally, we devised an embedding technique to practically compute the PWDs and achieve arbitrarily high order of convergence. The numerical tests conducted on various problems provided experimental confirmation of the theoretical findings. The comparison with the third-order modified Patankar Runge-Kutta method presented in \cite[Lemma 6, Case II with $\gamma=0.5$]{Kopecz2018} highlighted the superior performance of the proposed MPLM-$k(p)$ integrators. Furthermore, the MPLM-$k(p)$ schemes proved to be competitive with the high order modified Patankar deferred correction discretizations in \cite{Torlo2020}, especially in the case of high-dimensional systems and  vanishing initial states.

Given the results of this paper, MPLM-$k(p)$ methods demonstrate considerable potential for the numerical integration of production-destruction systems.  However, several aspects require deeper analysis, which we plan to address in future work. First of all, the optimization of coefficients selection for the underlying linear multistep method may be investigated, considering various combinations of $k$ and $p$ to ensure both positivity constraint and high order of convergence. The possibility of an extension to include negative coefficients may be considered, as well. Furthermore, since the methods of Table \ref{tab:1} exhibit reduced efficiency on stiff ODEs tests such as the Robertson problem \cite{Kopecz2018second,Huang2019II}, the necessity arises of a comprehensive MPLM stability analysis. In this regard, the implementation of variable stepsize approaches and local error control strategies may reveal of high interest.





\section*{Appendix A. Concistency Necessary Condition}\label{Appendix}
In section \ref{sec:Convergence} we introduced, with Theorem \ref{thm:Consis_order_p}, a condition on the Patankar weight denominators which leads to order $p$ consistent MPLM schemes. Here, our objective is to prove Theorem \ref{Them_Con_Neces} showing that \eqref{eq:cond_sigma} represents a necessary requirement for the consistency, as well. To do that, we consider the particular production-destruction system \eqref{eq:PDS} with
\begin{equation}\label{eq: particular_PDS}
	\begin{split}
		P_{i}(\bm{y})=\begin{cases}
			\mu y_i, \;\; &\mbox{if}\; i=j^*,\phantom{j=i^*,} \\
			0, &\mbox{otherwise,}
		\end{cases} \!\! \mbox{and} \;\;\;\;\; D_{i}(\bm{y})=\begin{cases}
			\mu y_i, \;\; &\mbox{if}\; i=i^*,\phantom{j=j^*,} \\
			0, &\mbox{otherwise,}
		\end{cases}
	\end{split}
\end{equation}
where $i^*$ and $j^*$ are fixed indices in $\{1,\dots,N\}$ and $\mu$ is a given positive constant. It can be proved that 
\begin{equation*}
	y_{i^*}(t)=\exp(-\mu t),\qquad y_{j^*}(t)=2-\exp(-\mu t),\qquad y_{l}(t)=1, \qquad \begin{array}{rr}
	     1\leq l\leq N,  \\
	     l\neq i^*, \;
      l \neq j^*, 
	\end{array}
\end{equation*}
is the unique solution of the positive and fully conservative PDS \eqref{eq:PDS}-\eqref{eq: particular_PDS}. The following investigation outlines the behaviour of the MPLM discretizations of the class \eqref{eq:Compact_Expression} applied to \eqref{eq:PDS}-\eqref{eq: particular_PDS}.  

\begin{customproof}{Them_Con_Neces}
    The order $p$ convergence of the underlying LM discretization and the consistency hypothesis on the MPLM-$k$ method \eqref{eq:Compact_Expression}, imply
    \begin{equation*}
        \delta_i^{LM}(h;t_n)=\mathcal{O}(h^{p+1}) \qquad \mbox{and} \qquad \delta_i(h;t_n)=\mathcal{O}(h^{p+1}), \qquad
            n\geq k, \quad i=1,\dots,N,
    \end{equation*}
    where the local errors $\delta_i^{LM}(h;t_n)$ and $\delta_i(h;t_n)$ are defined in \eqref{eq:delta_LM} and \eqref{eq:Local_Error}, respectively. Subtracting the former from the latter yields
    \begin{equation*}
        \begin{split}
            &\sum_{r=1}^{k}\beta_r\sum_{j=1}^{N}\left(p_{ij}(\bm{y}(t_{n-r}))\left(1-\dfrac{y_j(t_n)}{\sigma_j(\bm{y}(t_{n-1}),\dots,\bm{y}(t_{n-k}))}\right)\right)\\
            -&\sum_{r=1}^{k}\beta_r\sum_{j=1}^{N}\left(d_{ij}(\bm{y}(t_{n-r}))\left(1-\dfrac{y_i(t_n)}{\sigma_i(\bm{y}(t_{n-1}),\dots,\bm{y}(t_{n-k}))}\right)\right)=\mathcal{O}(h^{p}),
        \end{split}
    \end{equation*}
    for $i=1,\dots,N$ and $n\geq k.$  Furthermore, for the particular PDS \eqref{eq: particular_PDS}, taken $i=i^*,$ 
    \begin{equation*}
        \mu\sum_{r=1}^{k}\beta_r y_{i^*}(t_{n-r})\left(1-\dfrac{y_{i^*}(t_n)}{\sigma_{i^*}(\bm{y}(t_{n-1}),\dots,\bm{y}(t_{n-k}))}\right)=\mathcal{O}(h^{p}).
    \end{equation*}
    Therefore, the result comes from the positivity of the system and the arbitrariness of the choice of $1\leq i^*\leq N.$
\end{customproof}

\section*{Acknowledgment}
This work was supported by the Italian MUR under the PRIN 2022 project No. 2022N3ZNAX and the PRIN 2022 PNRR project No. P2022WC2ZZ, and by the INdAM under the GNCS Project E53C23001670001.




\end{document}